\newtheorem{thm}{{Theorem}}[section]
\newtheorem{prop}{{Proposition}}[section]
\newtheorem{lem}{{Lemma}}[section]
\newtheorem{rem}{{Remark}}[section]
\newtheorem{example} {Example}[section]
\renewcommand\qed{$\blacksquare$}
\def\R{{\mathbb{R}}}
\def\CC{{\rm \kern.24em \vrule width.02em height1.4ex depth-.05ex \kern-.26emC}}
\newcommand{\eps}{\varepsilon}
\def\theequation{\@arabic{\c@section}.\@arabic{\c@equation}}
\begin{document}

\title{Homogenization for non-local elliptic
operators in both perforated and non-perforated domains}

\author{Loredana B\u alilescu\thanks{\noindent Department of Mathematics and Computer Science, University of Pite\c{s}ti, 110040 Pite\c{s}ti,
Str. T\^argu din Vale nr.1, Arge\c{s}, Romania and Department of Mathematics, Federal University of Santa Catarina, Brasil; {\tt smaranda@dim.uchile.cl} }
\qquad 
Amrita Ghosh\thanks{\noindent  Department of Mathematics, Universit\'e de Pau et des Pays de l'Adour,  France {\tt amrita.ghosh@univ-pau.fr }}
\qquad 
\quad Tuhin Ghosh\thanks{\noindent Department of Mathematics, University of Washington; {\tt tuhing@uw.edu}}
}

\date{}

\maketitle
\vspace*{-2ex}
\abstract{
\noindent
In this paper, we focus on the homogenization process of the  non-local elliptic boundary value problem
$$\mathcal{L}_\eps^s u_\eps =(-\nabla\cdot (A_\eps(x)\nabla))^{s}u_\eps=f \mbox{ in } \mathcal O,
$$
with $0<s<1$, considering non-homogeneous Dirichlet type condition outside of the bounded domain $\mathcal O\subseteq \R^n$. We find the homogenized problem by using the $H$-convergence method, as $\eps\to 0$, under  standard uniform ellipticity, boundedness and symmetry assumptions on coefficients $A_\eps(x)$, with the homogenized coefficients as the standard $H$-limit (cf. \cite{MT1})
of the sequence $\{A_\eps\}_{\eps>0}$. We also prove that the commonly referred to as \textit{the strange term} in the literature   (see \cite[Chapter 4]{MT}) does not appear  in the homogenized problem associated with  the fractional Laplace operator $(-\Delta)^s$ in a perforated domain. Both of these results have been obtained in the class of general microstructures. Consequently, we could certify that the homogenization process, as $\varepsilon\to 0$, is stable under $s\to 1^{-}$ in the non-perforated domains, but not necessarily in the case of perforated domains.     
}

\vskip .5cm\noindent
{\bf Keywords:} Homogenization, $H$-convergence, non-local operators, integro-differential operator, fractional operators. 
\vskip .5cm
\noindent

\section{Introduction}

The general question tackled in this paper is the
homogenization process of Dirichlet type problem associated with fractional elliptic non-local
operator in bounded 
domains. Precisely, let $\mathcal{L} =-\nabla\cdot (A(x)\nabla)$ be the classical uniformly elliptic
operator in divergence form  with the anisotropic matrix valued function $A(x)$ defined in whole space $\mathbb{R}^n$. Then, for
$0<s<1$, we consider the fractional non-local
operator (for the definition, see Section \ref{Sect2} below):
$$\mathcal{L}^s =(-\nabla\cdot (A(x)\nabla))^{s}.$$
We are interested in the  restriction of the fractional  Laplacian $\mathcal{L}^s$ in a bounded domain $\mathcal{O}\subset\mathbb{R}^n$ 
and the associated non-homogeneous Dirichlet exterior boundary value problem $\mathcal{L}^su =f$ in a  smooth enough  bounded domain $\mathcal{O}\subset\mathbb{R}^n$ with $u=g$ in $\mathbb{R}^n\setminus \mathcal{O}$. 

These kind of fractional and non-local operators often arise in problems modelling diffusion process, ergodic random environments and random processes with jumps, enabling possible applications in probability theory, physics, finance, and biology, to name a few (for more details, see the survey works \cite{BV,RX}). In particular, the above operator $\mathcal{L}^s$ as a  linear integro-differential operator (see \eqref{t45}) could be considered as an
infinitesimal generator of generalized L\'{e}vy processes of the probabilistic/stochastic model under consideration with a random process that allows long jumps with a polynomial tail (see the books \cite{APP,BJ, EK}). 
For example, if $g=0$, probabilistically it represents  the  infinitesimal  generator of a symmetric $2s$-stable L\'{e}vy process that particles are killed upon leaving the domain $\mathcal{O}$. 


The paper aims at providing a macro scale approximation
to a problem with heterogeneities/microstructures at micro scale $\eps$ by suitably averaging out
small scales $(\eps\to 0$) and by incorporating their effects on large scales. These effects are quantified by the so-called homogenized coefficients \cite{A,BLP,JKO,T}.
We will be using the $H$-convergence method (for more details on $H$-limits, we refer to \cite{A,MT-Hconv,T}), under  standard uniform ellipticity, boundedness and symmetric assumptions
on the coefficient matrices $\{A_\eps(x)\}_{\eps>0}$. 

More precisely, let us consider $s\in(0,1)$, $\mathcal{O}\subset\mathbb{R}^n$ be a  bounded Lipschitz domain. For each $\eps>0$, consider $u_\eps\in H^s(\mathbb{R}^n)$, which is the solution to the following non-local Dirichlet type problem:
\begin{equation}\label{t58}
\begin{cases}
\mathcal{L}_\eps^{s}u_\eps= \left(-\nabla\cdot(A_\eps(x)\nabla)\right)^s u_\eps=f & \mbox{ in \ensuremath{\mathcal{O}}},\\
u_\eps=g & \mbox{ in }\mathbb{R}^n\setminus\mathcal{O},
\end{cases}
\end{equation}
for some $f\in \widetilde{H}^{s}(\mathcal{O})^{*}$ (see \eqref{def:Hstilde} below for the definition of this space) and $g\in H^s(\mathbb{R}^n)$. 

Our main goal is to pass to the limit in the above problem \eqref{t58}, as $\eps\to 0$, and to find the limit equation or the homogenized problem. Our main finding is that the homogenized equation is governed by the non-local elliptic operator $$\mathcal{L}^s_{*}= \big(-\nabla\cdot A_{*}(x)\nabla\big)^s,$$ where $A_{*}(x)$ is the standard $H$-limit of the sequence $\{A_\eps(x)\}_{\eps>0}$ in $\mathbb{R}^n$  under the standard uniform ellipticity and boundedness hypotheses on $\{A_\eps(x)\}_{\eps>0}=\{(a^{ij}_\eps(x))_{1\leq i,j\leq n}\}_{\eps>0}$ given as
\begin{equation}\label{t51}
\begin{cases}
a^{ij}_\eps(x)=a^{ji}_\eps(x)\mbox{ for all }x\in\mathbb{R}^{n}, 1\leq i,j\leq n,\mbox{ and }
\\
\lambda^{-1}|\xi|^{2}\leq\sum\limits_{i,j=1}^{n}a^{ij}_\eps(x)\xi_{i}\xi_{j}\leq\lambda|\xi|^{2}\mbox{ for all }x\in\mathbb{R}^{n}, \eps>0, \mbox{ and for some }\lambda>0.
\end{cases}\end{equation}
Let us state our first main result concerning the  homogenization process for   fractional non-local elliptic
operators in non-perforated domain.
\begin{thm}\label{t9}
Let $s\in(0,1)$, $\mathcal{O}\subset\mathbb{R}^n$ be a bounded domain with sufficiently smooth boundary. We assume
that the sequence $\{A_\eps(x)\}_{\eps>0}$ satisfies condition \eqref{t51}.
For each $\eps>0$, let $u_\eps\in H^s(\mathbb{R}^n)$ be the solution of problem \eqref{t58}, for some fixed  $f\in \widetilde{H}^{s}(\mathcal{O})^{*}$ and $g\in H^s(\mathbb{R}^n)$.
 Then, as $\eps\to 0$, up to a subsequence, we have 
\[u_\eps \rightharpoonup u \mbox{ weakly in }H^s(\mathbb{R}^n),\]
with the limit $u\in H^s(\mathbb{R}^n)$  characterized as the unique solution of the following homogenized problem: 
\[
\begin{cases}
\mathcal{L}_{*}^{s}u= \left(-\nabla\cdot(A_{*}(x)\nabla)\right)^s u=f & \mbox{ in \ensuremath{\mathcal{O}}},\\
u=g & \mbox{ in }\mathbb{R}^n\setminus\mathcal{O},
\end{cases}
\]
 where $A_{*}(x)$ is the $H$-limit of the sequence $\{A_\eps(x)\}_{\eps>0}$ in $\mathbb{R}^n$, that is,  
\[A_\eps\nabla w_\eps \rightharpoonup A_{*}\nabla w\quad \mbox{ weakly in } L^2(\mathbb{R}^n)^{n},\] for all test sequences $w_\eps\in H^1(\mathbb{R}^n)$ satisfying 
\begin{align*}
w_{\eps} &\rightharpoonup w \quad\mbox{weakly in }H^1(\mathbb{R}^n),\\
-\nabla\cdot(A_\eps\nabla w_\eps)& \quad\mbox{ strongly convergent in } H^{-1}(\mathbb{R}^n).
\end{align*}

Moreover, we have the following flux and energy convergences, respectively, as $\eps\to 0$:
\begin{eqnarray}\label{t21}
\mathcal{L}^{s/2}_\eps u_\eps \rightharpoonup \mathcal{L}^{s/2}_{*}u \quad\mbox{ weakly in }L^2(\mathbb{R}^n),
\\
\|\mathcal{L}^{s/2}_\eps u_\eps\|_{L^2(\mathbb{R}^n)} \to 
\|\mathcal{L}^{s/2}_{*}u\|_{L^2(\mathbb{R}^n)}.
\label{t2100}
\end{eqnarray}
\end{thm}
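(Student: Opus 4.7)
\emph{Step 1 — a priori bound and weak limit.} The starting point is to test the weak formulation of \eqref{t58} against $u_\eps-\tilde g$, where $\tilde g\in H^s(\mathbb{R}^n)$ is a fixed extension of the exterior datum (so $u_\eps-\tilde g\in \widetilde H^s(\mathcal{O})$ is admissible). The uniform ellipticity \eqref{t51} yields an $\eps$-uniform norm equivalence $\|\mathcal{L}^{s/2}_\eps v\|_{L^2(\mathbb{R}^n)}\simeq \|v\|_{H^s(\mathbb{R}^n)}$ for $v\in H^s(\mathbb{R}^n)$ with constants depending only on $\lambda$, so coercivity of the bilinear form associated with $\mathcal{L}^s_\eps$ delivers $\|u_\eps\|_{H^s(\mathbb{R}^n)}\le C$. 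Weak compactness then produces a subsequence $u_\eps\rightharpoonup u$ in $H^s(\mathbb{R}^n)$ with $u-g\in\widetilde H^s(\mathcal{O})$.

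\emph{Step 2 — identification of the limit via resolvents.} The natural bridge from $H$-convergence of the local coefficients $\{A_\eps\}$ to convergence of the non-local operators $\{\mathcal{L}^s_\eps\}$ is the Balakrishnan integral
$$
\mathcal{L}^s_\eps v \;=\; \frac{\sin(\pi s)}{\pi}\int_0^\infty t^{s-1}\,\mathcal{L}_\eps(tI+\mathcal{L}_\eps)^{-1}v\,dt,
$$
which writes the non-local operator as a continuous superposition of resolvents of the local second-order operator. For each fixed $t>0$, the resolvent equation $(tI+\mathcal{L}_\eps)w^\eps_t = h$ is a classical coercive problem, and the standard $H$-convergence of $\{A_\eps\}$ (and its well-known stability under bounded lower-order perturbations) delivers $w^\eps_t\rightharpoonup w_t$ in $H^1(\mathbb{R}^n)$ and $A_\eps\nabla w^\eps_t\rightharpoonup A_*\nabla w_t$ in $L^2(\mathbb{R}^n)^n$, where $w_t$ solves $(tI+\mathcal{L}_*)w_t = h$. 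Combining this pointwise-in-$t$ convergence with the uniform contractivity $\|\mathcal{L}_\eps(tI+\mathcal{L}_\eps)^{-1}\|_{\mathcal{L}(L^2)}\le 1$ and applying a dominated convergence argument under the $t$-integral, I pass to the limit in
$$
\int_{\mathbb{R}^n}\mathcal{L}^{s/2}_\eps u_\eps\cdot \mathcal{L}^{s/2}_\eps\phi\,dx = \langle f,\phi\rangle,\qquad \phi\in \widetilde H^s(\mathcal{O}),
$$
obtaining $\int \mathcal{L}^{s/2}_* u\cdot \mathcal{L}^{s/2}_* \phi\,dx=\langle f,\phi\rangle$. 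This identifies $u$ as the solution of the homogenized problem, and uniqueness for the homogenized equation promotes subsequential convergence to full convergence.

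\emph{Step 3 — flux and energy convergences.} The weak convergence \eqref{t21} falls out of Step~2: the family $\{\mathcal{L}^{s/2}_\eps u_\eps\}$ is $L^2$-bounded and its weak-$L^2$ limit is pinned down by the Balakrishnan-integrated identity. For the strong energy convergence \eqref{t2100}, I would use the energy balance obtained by testing \eqref{t58} with $u_\eps-g$,
$$
\|\mathcal{L}^{s/2}_\eps u_\eps\|_{L^2(\mathbb{R}^n)}^2 \;=\; \langle f,u_\eps-g\rangle + \int_{\mathbb{R}^n}\mathcal{L}^{s/2}_\eps g\cdot \mathcal{L}^{s/2}_\eps u_\eps\,dx,
$$
and pass to the limit on the right by combining $u_\eps\rightharpoonup u$, the weak-$L^2$ convergence of $\mathcal{L}^{s/2}_\eps u_\eps$ from \eqref{t21}, and — the delicate ingredient — the strong $L^2$-convergence $\mathcal{L}^{s/2}_\eps g\to \mathcal{L}^{s/2}_* g$.

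\emph{Anticipated main obstacle.} The heart of the argument is precisely this last transfer of $H$-convergence at the level of the local operators to convergence within the spectral/fractional calculus. One must verify that $H$-convergence of $\{A_\eps\}$ forces Mosco (equivalently, strong resolvent) convergence of the quadratic forms associated with $\{\mathcal{L}_\eps\}$, and then secure $\eps$-uniform resolvent estimates near both endpoints $t\to 0^+$ and $t\to\infty$ that legitimize the dominated-convergence argument under the Balakrishnan integral. Once this is in place, the functional calculus (in particular the fractional powers and their square roots) passes to the limit, and the three convergences in the theorem follow together.
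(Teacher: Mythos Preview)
Your plan pursues a genuinely different route from the paper. The paper never touches the Balakrishnan formula or strong resolvent convergence; instead it lifts the problem to the Caffarelli--Silvestre/Stinga--Torrea extension in $\mathbb{R}^{n+1}_+$, where $\mathcal{L}^s_\eps$ is realized as the Dirichlet-to-Neumann map of the degenerate local operator $\nabla_{x,y}\cdot(y^{1-2s}\widetilde A_\eps(x)\nabla_{x,y})$. On strips $\{\delta<y<\delta^{-1}\}$ the weight $y^{1-2s}$ is harmless, so classical $H$-convergence applies directly to the extended local problem, identifying the limit coefficients as $\widetilde A_*$; sending $\delta\to 0$ and reading the DtN map yields $\mathcal{L}^s_*u=f$. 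Energy and flux convergences are likewise obtained at the level of the extension (for the flux, the paper simply reruns the argument with $s$ replaced by $s/2$).

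Your spectral route is viable, and what you flag as the main obstacle is indeed the crux. Two points deserve more care than your sketch gives them. First, $H$-convergence on $\mathbb{R}^n$ delivers only \emph{weak} $H^1$-convergence of $(tI+\mathcal{L}_\eps)^{-1}h$, and Rellich fails globally; to upgrade to strong $L^2$-convergence you must add a uniform-in-$\eps$ tail estimate (e.g.\ test the resolvent equation with $\chi_R^2 w_\eps$ for a cutoff $\chi_R$ supported outside $B_R$) and combine it with local compactness. Second, in the Balakrishnan integral for $\mathcal{L}^{s/2}_\eps\psi$, the bound $\|\mathcal{L}_\eps(tI+\mathcal{L}_\eps)^{-1}\psi\|\le\|\psi\|$ is not integrable against $t^{s/2-1}$ at $t=\infty$; you need instead the spectral bound $\|\mathcal{L}_\eps(tI+\mathcal{L}_\eps)^{-1}\psi\|\le t^{-1/2}\|\mathcal{L}^{1/2}_\eps\psi\|\le C\,t^{-1/2}\|\nabla\psi\|_{L^2}$, available uniformly for $\psi\in H^1$ and integrable at infinity since $s<1$, and then argue by density.

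The trade-off: the paper's extension approach stays entirely within off-the-shelf $H$-convergence (no functional-calculus subtleties) at the cost of working in one more dimension with degenerate weights; your approach remains in $\mathbb{R}^n$ and is conceptually clean once strong resolvent convergence is in hand, but that step, together with the dominated-convergence control of the Balakrishnan integral, is precisely what the extension technique sidesteps.
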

\begin{rem}
The above result is also applicable for spectral non-local operator $\mathcal{L}^{s}_S$, which is defined by the normalized eigenfunctions and eigenvalues for the operator $\mathcal{L}$ in $\mathcal{O}$ with homogeneous Dirichlet/Neumann boundary conditions. Let $\{\varphi_k\}_{k\in\mathbb{N}}$ denotes an orthonormal basis of $L^2(\mathcal{O})$ satisfying
\[\begin{cases}
\mathcal{L}\varphi_k=\big(-\nabla\cdot (A(x)\nabla\big)\varphi_k=\lambda_k\varphi_k &\mbox{ in }\mathcal{O},\\
\varphi_k = 0 &\mbox{ on }\partial\mathcal{O}.
\end{cases}\]
Then, the spectral non-local operator $\mathcal{L}^s_S$ $(0<s<1$)  is defined as
\[\forall u\in H^s(\mathcal{O}),\quad\mathcal{L}^s_S u =\sum_{k=1}^\infty \lambda_k^s\langle \varphi_k,u\rangle_{2}\, \varphi_k \quad\mbox{ in }\mathcal{O}.\]
In \cite{caffarelli2016fractional} (also \cite{caffarelli2007extension}) the Caffarelli-Stinga result (see Proposition \ref{t24}, \ref{t16} for a statement of the result) was proved for this operator, thus achieving
a local problem posed on a semi-infinite cylinder $\mathcal{O}\times(0,\infty)$, whose Dirichlet-Neumann map defines the operator $\mathcal{L}^s_S$. Since our method to prove Theorem \ref{t9} relies on the analysis of the extended local problem in $\mathbb{R}^{n+1}_{+}$ (as shown in Section \ref{sect3}), we say the above theorem holds true for the homogenization of the spectral non-local operators $\{(\mathcal{L}^s_S)_\varepsilon\}_{\varepsilon>0}$ in $\mathcal{O}$.
\end{rem}

Let us now introduce our second problem to be considered and state the second main result.  
To this end, we define a sequence of any closed subsets $\{T_\eps\}_{\eps>0}\subset\R^n$,  which are called holes, and we take the  perforated domain $\mathcal{O}_\eps$ simply defined as
follows:\begin{equation}\label{t98} \mathcal{O}_\eps :=\mathcal{O}\setminus\underset{0<\delta\leq\eps}{\cup}\ T_\delta,
\end{equation}
with the condition on Lebesgue measure: 
\begin{equation}\label{t99} \lim_{\eps\to 0} |\mathcal{O}\setminus \mathcal{O}_\eps|=0.
\end{equation}
For $s\in(0,1) $ and for each $\eps>0$, let $u_\eps\in H^s(\mathbb{R}^n)$ be the solution of the following non-local Dirichlet problem in a perforated domain:
\begin{equation}\label{t10001}
\begin{cases}
(-\Delta)^s u_\eps=f  &\mbox{ in \ensuremath{\mathcal{O}_\eps}},
\\
u_\eps=g  &\mbox{ in }\mathbb{R}^n\setminus\mathcal{O}_\eps,
\end{cases}
\end{equation}
for some $f\in\widetilde{H}^{s}(\mathcal{O})^{*}$ and $g\in H^{s}(\mathbb{R}^{n})$. 

Motivated from the first result (cf. Theorem \ref{t9}) we allow such perforated domains $\mathcal{O}_\eps$ where the following hypothesis are satisfied.  Let us assume that there exist a sequence of functions $\{w_\eps\}_{\eps>0}$ such that: 
\begin{enumerate}
\item[(H1)] $w_\eps\in H^1(\mathcal{O})$;
\item[(H2)] $w_\eps=0$ on the holes $\underset{0<\delta\leq\eps}{\cup}T_\delta$;
\item[(H3)] $w_\eps \rightharpoonup 1$ weakly in $H^1(\mathcal{O})$.
\end{enumerate}

We now state our second main result and we show that the
commonly referred to
as "the `strange term'" in the literature  does not appear  in the homogenized
problem associated with  the fractional Laplace operator 
in a  perforated domain.
\begin{thm}\label{t63}
Let $s\in(0,1)$, $\mathcal{O}\subset\mathbb{R}^n$ be a bounded domain with
sufficiently smooth boundary and $\{\mathcal{O}_\eps\}_{\eps>0}$ be defined by \eqref{t98} with \eqref{t99}. We assume the above hypothesis (H1)-(H3) on $\{\mathcal{O}_\eps\}_{\eps>0}$.
For each $\eps>0$, let $u_\eps\in H^s(\mathbb{R}^n)$ be the solution of problem  \eqref{t10001} for given $f\in \widetilde{H}^{s}(\mathcal{O})^{*}$ and $g\in H^s(\mathbb{R}^n)$.
 Then, as $\eps\to 0,$ up to a subsequence, we have 
\[u_\eps \rightharpoonup u \quad \mbox{ weakly in }H^s(\mathbb{R}^n),\]
where the limit $u\in H^s(\mathbb{R}^n)$ can be characterized as the unique solution of the following homogenized problem:
\begin{equation}\label{t43}\begin{cases}
(-\Delta)^su  = f \quad&\mbox{in }\mathcal{O},
\\
u=g\quad&\mbox{in }\mathbb{R}^n\setminus\mathcal{O}.
\end{cases}
\end{equation}
\end{thm}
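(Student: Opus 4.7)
The plan is to adapt the oscillating test function method of Tartar--Cioranescu--Murat to the fractional setting and to exploit the structural fact that strong convergence in $H^{s}(\R^{n})$ with $s<1$ is significantly cheaper to obtain than strong convergence in $H^{1}(\R^{n})$. This gap between $s$ and $1$ is conceptually the reason why the \emph{strange term} fails to materialise: the corrector that would generate it in the local $(s=1)$ case is paid for in the $H^{1}$-norm, and a standard interpolation inequality converts that cost into a vanishing contribution as soon as $s<1$.

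Concretely, I would first derive the uniform bound $\|u_{\eps}\|_{H^{s}(\R^{n})}\le C$ by testing \eqref{t10001} against $u_{\eps}-g$, which is admissible since $u_{\eps}-g\in\widetilde{H}^{s}(\mathcal{O}_{\eps})$ by the exterior condition. Along a subsequence, $u_{\eps}\rightharpoonup u$ weakly in $H^{s}(\R^{n})$. Because $u_{\eps}=g$ on the \emph{fixed} set $\R^{n}\setminus\mathcal{O}$ for every $\eps>0$ and the subspace of $H^{s}$-functions coinciding with $g$ on $\R^{n}\setminus\mathcal{O}$ is weakly closed, the exterior Dirichlet condition $u=g$ on $\R^{n}\setminus\mathcal{O}$ is inherited at the limit. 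To identify the equation satisfied by $u$ inside $\mathcal{O}$, for $\varphi\in C_{c}^{\infty}(\mathcal{O})$ I would test \eqref{t10001} against $w_{\eps}\varphi$, extended by zero outside $\mathcal{O}$: hypotheses (H1)--(H2) together with $\operatorname{supp}\varphi\Subset\mathcal{O}$ ensure that this is an admissible element of $\widetilde{H}^{s}(\mathcal{O}_{\eps})$, and the weak formulation reads
\[
\int_{\R^{n}}(-\Delta)^{s/2}u_{\eps}\,(-\Delta)^{s/2}(w_{\eps}\varphi)\,dx=\langle f,w_{\eps}\varphi\rangle.
\]

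The heart of the argument is to show that $w_{\eps}\varphi\to\varphi$ \emph{strongly} in $H^{s}(\R^{n})$. By (H1) and (H3) the sequence $\{w_{\eps}\varphi\}$ is bounded in $H^{1}(\R^{n})$ (the compact support of $\varphi$ makes the extension-by-zero harmless), and the compact embedding $H^{1}(\mathcal{O})\hookrightarrow L^{2}(\mathcal{O})$ combined with (H3) yields $w_{\eps}\varphi\to\varphi$ strongly in $L^{2}(\R^{n})$. The interpolation inequality
\[
\|v\|_{H^{s}(\R^{n})}\le C\,\|v\|_{L^{2}(\R^{n})}^{1-s}\,\|v\|_{H^{1}(\R^{n})}^{s},
\]
applied to $v=w_{\eps}\varphi-\varphi$, then delivers the desired strong $H^{s}$ convergence because $1-s>0$. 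With this in hand, the left-hand side of the weak formulation is the pairing of a weakly convergent $L^{2}$-sequence with a strongly convergent one and passes to the limit, while the right-hand side converges to $\langle f,\varphi\rangle$ since $f\in\widetilde{H}^{s}(\mathcal{O})^{*}$. Density of $C_{c}^{\infty}(\mathcal{O})$ in $\widetilde{H}^{s}(\mathcal{O})$ then identifies $(-\Delta)^{s}u=f$ in $\mathcal{O}$, and uniqueness of the Dirichlet problem \eqref{t43} promotes the subsequential convergence to the whole sequence.

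The main obstacle I anticipate is the book-keeping around the oscillating test function: verifying that the extension-by-zero of $w_{\eps}\varphi$ really lies in $H^{1}(\R^{n})$ with norm bounded independently of $\eps$, that it vanishes on the \emph{entire} complement $\R^{n}\setminus\mathcal{O}_{\eps}$ including every hole, and setting up the energy estimate in a way compatible with the natural regularity of the data $f\in\widetilde{H}^{s}(\mathcal{O})^{*}$ and $g\in H^{s}(\R^{n})$. Once these technical points are settled, the whole classical \emph{strange-term phenomenon} collapses into the single vanishing factor $\|w_{\eps}\varphi-\varphi\|_{L^{2}}^{1-s}$, which is the conceptual content of the theorem.
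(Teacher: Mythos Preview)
Your proposal is correct and follows essentially the same route as the paper: derive the uniform $H^{s}$ bound, test against the oscillating function $w_{\eps}\varphi$, and pass to the limit via the weak--strong pairing $(-\Delta)^{s/2}u_{\eps}\rightharpoonup(-\Delta)^{s/2}u$ in $L^{2}$ against $(-\Delta)^{s/2}(w_{\eps}\varphi)\to(-\Delta)^{s/2}\varphi$ strongly in $L^{2}$. The only difference is cosmetic: for the strong convergence $w_{\eps}\varphi\to\varphi$ in $H^{s}(\R^{n})$ the paper invokes the Rellich-type compact embedding $H^{1}_{\overline{\mathcal{O}}}(\R^{n})\hookrightarrow H^{s}(\R^{n})$ directly, whereas you obtain it by interpolation between the $L^{2}$ strong limit and the $H^{1}$ bound---two standard ways of exploiting the same gap $s<1$.
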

\begin{rem}
Since $\mathcal{L}^sw\to \mathcal{L}w$ in $L^2(\mathcal{O})$, as $s\to 1^{-}$, for $w\in H^2(\mathcal{O})$ (see \cite{grigoryan2009heat,EGV}), then from our result Theorem \ref{t9} we can essentially claim that the homogenization process, as $\eps\to 0$, is stable, under the limiting approach as $s\to 1^{-}$, that is, both of these limit operations, as $\eps\to 0$ and as $s\to 1^{-}$, are interchangeable. However, in  Theorem \ref{t63}, we find out  it is not the case. Both limiting processes, as $\eps\to 0$ and as $s\to 1^{-}$, may not be always interchangeable  because, in the local case, depending on the estimated size of the tiny holes $\{T_\eps\}_{\eps>0}$, one might end up having some nonzero zeroth order extra term (say $\mu(x)$) commonly referred to as a ``strange term''  with the Laplacian in the homogenized operator as $-\Delta+\mu$ (see \cite[Chapter 4]{MT}, \cite{DRL}).  
\end{rem}
The above homogenization results are new in the non-local settings and also help to provide a certain classification of perforated and non-perforated domains with respect to the fractional power of an elliptic operator. In both Theorem \ref{t9} and Theorem \ref{t63} we do not assume any periodicity or scaling  conditions, neither on the sequence $\{A_\eps(x)\}_{\eps>0}$ in the non-perforated case, nor on the sequence of perforated domains $\{\mathcal{O}_\eps\}_{\eps>0}$ respectively, in order to study the homogenization process.  

Let us now review few known studies of homogenization problems in non-local settings. The known cases are mostly in some prototype of integro-differential operator.  For example, a non-local linear operator with a kernel of convolution type in periodic medium \cite{PZ}, concerning certain diffusion process with jumps have been considered. That is also known Feller process generated by an integro-differential operator  \cite{SN}.  Homogenization of a certain class of integro-differential equations with L\'{e}vy operators \cite{ARM}, including scaling limits for symmetric It\^{o}-L\'{e}vy processes in random medium \cite{RVV} has been studied. Additionally, homogenization of a large class of fully non-linear elliptic integro-differential equations in periodic medium can be found in \cite{SCH}. For example, one prototype of  such integro-differential operator under consideration in \cite{PZ} is 
$$ \mathcal{L}^\eps u_\eps= \eps^{-n-2}\lambda\Big(\frac{x}{\eps}\Big)\int_{\mathbb{R}^n}a\Big(\frac{x-y}{\eps}\Big)\mu\Big(\frac{y}{\eps}\Big)\big( u_\eps(y)-u_\eps(x)\big)\, dy,$$ 
where $\lambda,\mu$ are bounded positive periodic functions characterizing the properties of the
medium, and $a$ is the jump kernel being a symmetric positive integrable function. They obtain the limit operator as a local operator $\mathcal{L}=-\sum\limits_{i,j=1}^n\Theta_{ij}\frac{\partial^2 u}{\partial x_i\partial x_j}$, where the homogenized coefficient $\Theta_{ij}$ can be derived from $a,\lambda,\mu$. 
In \cite{SCH2}, the author considers the 
 stochastic homogenization for elliptic integro-differential equations modelling stationary ergodic random environments:  
\begin{equation*}
F_\eps\Big(u_\eps,\frac{x}{\eps},\omega\Big)=\underset{\alpha}{\mbox{inf}}\, \underset{\beta}{\mbox{sup}}\,\Big\{ f^{\alpha\beta}\Big(\frac{x}{\eps},\omega\Big) + \int_{\mathbb{R}^n} \Big( u_\eps(x+y)+u_\eps(x-y)-2u_\eps(x)\Big)K^{\alpha\beta}\Big(\frac{x}{\eps},y,\omega\Big)\, dy\Big\}.
\end{equation*} 
Under some suitable conditions over the kernel $K^{\alpha\beta}$, the author obtains the homogenized equation as the certain viscosity solution of a  non-local, elliptic, and translation invariant operator of the same form above. However, in this paper, we don't restrict ourselves in certain examples and rather move into considering classical non-local elliptic problems in a bounded domain including both perforated and non-perforated types and study the homogenization process.


The outline of the remaining paper is the following. Section
\ref{Sect2}
deals with the functional framework of the fractional non-local elliptic operators $\mathcal{L}^s$. In
Section \ref{sect3} we introduce an extension problem which characterize
this non-local operator $\mathcal{L}^s$.
In Section \ref{sect4} we give the proof of our first main result. Finally,
Section \ref{sect5} focuses on the homogenization process of the fractional Laplace operator in perforated domains.

\section{Functional framework of the fractional non-local elliptic operator}
\label{Sect2}

Let us consider $\{\mathcal{L}_\eps\}_{\eps>0}$ a sequence of linear second order partial differential operator of the divergence form defined in the entire space $\mathbb{R}^n$ as
follows:
\begin{equation}\label{t39} 
\mathcal{L}_\eps:= -\nabla\cdot\left(A_\eps(x)\nabla\right),
\end{equation}
where  $\{A_\eps(x)\}_{\eps>0}=\{(a^{ij}(\frac x\eps))\}_{\eps>0}$, $x\in\mathbb{R}^{n}$, is a sequence of  $n\times n$
symmetric matrices satisfying the uniform ellipticity conditions \eqref{t51}.
We are going to study the  sequence of operators $\{\mathcal{L}_\eps^s\}_{\eps>0}$, with $0<s<1,$ \begin{equation*} 
\mathcal{L}_\eps^s:= (-\nabla\cdot\left(A_\eps(x)\nabla\right))^s,
\end{equation*}
defined over the entire space $\mathbb{R}^n$ and which will be completely defined in the sequel. 

Let us now consider the following  differential equation associated with this operator in the bounded domain $\mathcal{O}$:  
$$\mathcal{L}^s_\eps u_\eps =f \quad \mbox{ in }\mathcal{O},$$ for some suitable $f$. Next, in order to have a well-posed Dirichlet problem, we assume some exterior boundary condition as follows: 
$$u_\eps=g \quad \mbox{ in } \mathbb{R}^n\setminus\mathcal{O},$$ for some suitable $g$. Thus, the homogenization problem that we study is the following Dirichlet problem: 
\begin{equation*}
\begin{array}{rll}
\mathcal{L}_\eps^{s}u_\eps&=f  \quad\mbox{ in \ensuremath{\mathcal{O}}},
\\
u_\eps&= g \quad\mbox{ in }\mathbb{R}^n\setminus\mathcal{O}. 
\end{array}
\end{equation*}

Let us denote by $\mathcal{L}$ a second order linear
elliptic operator in the divergence form 
\begin{equation}
\mathcal{L}:=-\nabla\cdot(A(x)\nabla),\label{t4}
\end{equation}
which is defined in the entire space $\mathbb{R}^{n}$ for $n\geq2$,
where $A(x)=(a_{ij}(x))_{i,j}$, $x\in\mathbb{R}^{n}$ is an $n\times n$
symmetric matrix satisfying the symmetry and ellipticity conditions \eqref{t51}. We also assume that the variable coefficients of $\mathcal{L}$ are enough regular, precisely 
\begin{equation}
a^{ij}=a^{ji}\in C^{2}(\R^n),\quad1\le i,j\le n.\label{t35}
\end{equation}
It is well known that the operator $\mathcal{L}$ together with the domain 
\begin{equation}
\mathrm{Dom}(\mathcal{L})=H^{2}(\mathbb{R}^{n})\label{t37}
\end{equation}
is the maximal extension such that $\mathcal{L}$ is self-adjoint
and densely defined in $L^{2}(\mathbb{R}^{n})$ (see, for instance, \cite{grigoryan2009heat}).

\subsection{Fractional Sobolev spaces and non-local elliptic fractional differential operator }
\label{t14}

In this subsection, we will introduce the variable
coefficients fractional non-local operator $\mathcal{L}^{s}=(-\nabla\cdot(A(x)\nabla))^{s}$.
Let us note that, for $A(x)$ being an identity matrix, the operator $\mathcal{L}^{s}$
becomes the well-known fractional Laplace operator $(-\Delta)^{s}$, which has been widely studied in
papers \cite{caffarelli2007extension,caffarelli2016fractional,ruland2015unique,seeleycomplex} and the references therein.

In this paper, we denote by $C$
a general constant that may change in each occurrence and which will depend on the parameters involved. Wherever it is necessary, we are going to point out the dependence of $C$ on the parameters. Moreover, $\Gamma$ stands for the Gamma
function in the rest of the paper. 

Let us restrict our attention to the case $0 < s < 1$. In this interval, we have that $\Gamma(-s):= \displaystyle \frac{\Gamma(1-s)}{-s}< 0$.
\subsubsection*{Spectral
approach of non-local elliptic fractional differential operator}
We begin by defining the fractional operator $\mathcal{L}^{s}$ with 
$s\in(0,1)$, via the spectral characterization of $\mathcal{L}$ (for more details, see \cite{grigoryan2009heat,pazy2012semigroups,rudin1991functional,stinga2010extension}). Suppose that $\mathcal{L}$ is a linear second order differential
self-adjoint operator which is nonnegative and densely defined on
$L^{2}(\mathbb{R}^{n})$ for $n\geq2$. There is a unique resolution
$E$ of the identity, supported on the spectrum of $\mathcal{L}$
which is a subset of $[0,\infty)$, such that 
\[I=\int_{0}^{\infty}dE(\lambda)\]
and
\[
\mathcal{L}:=\int_{0}^{\infty}\lambda\, dE(\lambda),
\]
that is, 
\begin{equation}\label{t78}
\langle\mathcal{L}f,g\rangle_{L^{2}(\mathbb{R}^{n})}:=\int_{0}^{\infty}\lambda\, dE_{f,g}(\lambda),\ f\in\mbox{Dom}(\mathcal{L}),g\in L^{2}(\mathbb{R}^{n}),
\end{equation}
where $dE_{f,g}(\lambda)$ is a regular Borel complex measure of bounded
variation concentrated on the spectrum of $\mathcal{L}$, with \[
dE_{f,g}|_{|(0,\infty)}\leq\|f\|_{L^{2}(\mathbb{R}^{n})}\|g\|_{L^{2}(\mathbb{R}^{n})}.
\]
The norm  $\|\mathcal{L}f\|_{L^2(\mathbb{R}^n)}$, $f\in \mbox{Dom}(\mathcal{L})$, is  defined as follows:
\[ \|\mathcal{L}f\|^2_{L^2(\mathbb{R}^n)} := \int_{0}^{\infty}|\lambda|^2\, dE_{f,f}(\lambda).\]
If $\phi(\lambda)$ is a real measurable function defined on $[0,\infty)$,
then the operator $\phi(\mathcal{L})$ is formally given  by 
\[
\phi(\mathcal{L})=\int_{0}^{\infty}\phi(\lambda)dE(\lambda).
\]
That is, $\phi(\mathcal{L})$ is the operator with the domain 
\begin{equation}\label{t59}
\mbox{Dom}(\phi(\mathcal{L}))=\left\{ f\in L^{2}(\mathbb{R}^{n}):\int_{0}^{\infty}|\phi(\lambda)|^{2}dE_{f,f}(\lambda)<\infty\right\} ,
\end{equation}
defined by 
\begin{equation}\label{t85}
\left\langle \phi(\mathcal{L})f,g\right\rangle _{L^{2}(\mathbb{R}^{n})}=\int_{0}^{\infty}\phi(\lambda)\, dE_{f,g}(\lambda)
\end{equation}
and 
\begin{equation}\label{t82} \|\phi(\mathcal{L})f\|^2_{L^2(\mathbb{R}^n)}=\int_0^\infty |\phi(\lambda)|^2\, dE_{f,f}(\lambda).\end{equation}
Following this construction, we can define the fractional operators $\mathcal{L}^{s}$, $s\in(0,1)$, with the domain $\mbox{Dom}(\mathcal{L}^s)\, \supset\mbox{Dom}(\mathcal{L})$, as follows:
\begin{equation}
\mathcal{L}^{s}=\int_{0}^{\infty}\lambda^{s}~dE(\lambda)=\frac{1}{\Gamma(-s)}\int_{0}^{\infty}\left(e^{-t\mathcal{L}}-\mbox{Id}\right)~\frac{dt}{t^{1+s}}.
\label{t1}
\end{equation}
Here, $e^{-t\mathcal{L}}$
$(t\geq0)$ is the heat-diffusion semigroup generated by $\mathcal{L}$, 
with the domain $L^{2}(\mathbb{R}^{n})$, defined by 
\[e^{-t\mathcal{L}}=\int_{0}^{\infty}e^{-t\lambda}~dE(\lambda),\]
which enjoys the contraction property in $L^{2}(\mathbb{R}^{n})$, that is, $$\|e^{-t\mathcal{L}}f\|\leq\|f\|_{L^{2}(\mathbb{R}^{n})}.$$
Note that, for $f\in Dom(\mathcal{L}^s)\cap Dom(\mathcal{L}^{s/2})$, from \eqref{t85} it follows
\begin{equation}\label{t13}
\left\langle \mathcal{L}^sf,f\right\rangle _{L^{2}(\mathbb{R}^{n})}=\int_{0}^{\infty}\lambda^s\, dE_{f,f}(\lambda)= \|\mathcal{L}^{s/2}f\|^2_{L^2(\mathbb{R}^n)}.
\end{equation}
Moreover, for $f,g\in Dom(\mathcal{L}^s)\cap Dom(\mathcal{L}^{s/2})$, we have 
\begin{equation}
\label{t11}
\left\langle \mathcal{L}^sf,g\right\rangle _{L^{2}(\mathbb{R}^{n})}=\left\langle f,\mathcal{L}^sg\right\rangle _{L^{2}(\mathbb{R}^{n})}=\int_{0}^{\infty}\lambda^s\, dE_{f,g}(\lambda)= \left\langle \mathcal{L}^{s/2}f,\mathcal{L}^{s/2}g\right\rangle_{L^2(\mathbb{R}^n)},
\end{equation}
where we have used for  $f\in Dom(\mathcal{L}^{s/2})$ and $h \in Dom(\mathcal{L}^{s/2})$ that
\begin{equation}
\label{t2} \left\langle \mathcal{L}^{s/2}f,h\right\rangle_{L^2(\mathbb{R}^n)} = 
\int_{0}^{\infty}\lambda^{s/2}\, dE_{f,h}(\lambda).
\end{equation}
 Taking $h=\mathcal{L}^{s/2}g$ with $g\in Dom(\mathcal{L}^{s/2})$,  we deduce that  
\[dE_{f,h}=\lambda^{s/2}dE_{f,g}.\]

\subsubsection*{Kernel representation of the operator $\mathcal{L}^{s}$}

Let us write the definition given in \eqref{t1} for any $v\in Dom(\mathcal{L}^{s})$: 
\begin{equation}
\mathcal{L}^{s}v=\frac{1}{\Gamma(-s)}\int_{0}^{\infty}\left(e^{-t\mathcal{L}}v(x)-v(x)\right)\dfrac{dt}{t^{1+s}}.\label{t70}
\end{equation}

We introduce the distributional heat kernel $W_{t}(x,z)$ of
$\mathcal{L}$ satisfying: for any $\varphi,\mbox{ }\psi\in H^{s}(\mathbb{R}^{n})$,
\begin{equation}\label{t69}
(e^{-t\mathcal{L}}\varphi,\psi)_{\mathbb{R}^{n}}=\int_{\mathbb{R}^{n}}\int_{\mathbb{R}^{n}}W_{t}(x,z)\varphi(z)\psi(x)dzdx=(\varphi,e^{-t\mathcal{L}}\psi)_{\mathbb{R}^{n}},\mbox{ }t\geq0.
\end{equation}
Since $A(x)$ satisfies \eqref{t51} in $\mathbb{R}^n$, using \cite{Aronson}, it follows that, for some positive constants $c_1,c_2,c_3,c_4$ depending on ellipticity and boundness of $A$ and $n$, we have
\begin{equation}\label{t49} c_1\displaystyle\frac{e^{-\frac{|x-z|^2}{c_2}t}}{t^{n/2}}\leq W_t(x,z) \leq c_3\displaystyle\frac{e^{-\frac{|x-z|^2}{c_4}t}}{t^{n/2}}.\end{equation}
Let us now define the kernel of the heat semi-group $e^{-t\mathcal{L}}$ by 
\begin{equation}\label{t76}
\mathcal{K}^s(x,z)=\dfrac{1}{2|\Gamma(-s)|}\int_{0}^{\infty}W_{t}(x,z)\dfrac{dt}{t^{1+s}}.
\end{equation}
Since $e^{-t\mathcal{L}}$ is symmetric, we get  $\mathcal{K}^s(x,z)=\mathcal{K}^s(z,x)$
for any $x,z\in\mathbb{R}^{n}$, 
then from \cite[Theorem 2.4]{caffarelli2016fractional} it follows  that: for all  $v,w\in Dom(\mathcal{L}^{s}),$ 
\begin{equation}
(\mathcal{L}^{s}v,w)_{\mathbb{R}^{n}}=\int_{\mathbb{R}^{n}}\int_{\mathbb{R}^{n}}(v(x)-v(z))(w(x)-w(z))\mathcal{K}^s(x,z)\,dxdz.\label{t75}
\end{equation}
Furthermore, a direct computation and using  estimate \eqref{t49} on $W_t$, one can prove  that the kernel $\mathcal{K}^s$
enjoys the following pointwise estimate:
\begin{equation}
c_1\frac{\Gamma(\frac{n}{2}+s)}{2|\Gamma(-s)|}c_2^{\frac{n}{2} +s}\, \dfrac{1}{|x-z|^{n+2s}}\leq\mathcal{K}^s(x,z)\leq c_3\frac{\Gamma(\frac{n}{2}+s)}{2|\Gamma(-s)|}c_4^{\frac{n}{2} +s}\,\dfrac{1}{|x-z|^{n+2s}},\label{t86}
\end{equation}
where the constants $c_1,c_2,c_3,c_4$ appear in \eqref{t49} and are dependent on the ellipticity and boundness of $A$ and on $n$.  

We may also write for
 $v\in Dom(\mathcal{L}^{s})$ (for more details, see \cite{ghoshlinxiao}):
\begin{equation}
\mathcal{L}^{s}v(x)=\mbox{P.V.}\int_{\mathbb{R}^{n}}(v(x)-v(z))\mathcal{K}^s(x,z)\,dz,
\label{t45}
\end{equation}
where $\mbox{P.V.}$ stands for the standard principal value operator.
\subsubsection*{Sobolev spaces}
Let $H^{s}(\mathbb{R}^{n})=W^{s,2}(\mathbb{R}^{n})$ for $s\in \mathbb{R}$ the standard Sobolev space with the norm 
\[
\|u\|_{H^{s}(\mathbb{R}^{n})}=\|\left\langle D\right\rangle ^{s}u\|_{L^{2}(\mathbb{R}^{n})},
\]
where $\left\langle \xi\right\rangle =(1+|\xi|^{2})^{\frac{1}{2}}$.
Let $m(\xi)$ be an arbitrary $C^{\infty}$-smooth polynomial in $\xi$,
and the notation $m(D)u=\mathscr{F}^{-1}\{m(\xi)\hat{u}(\xi)\}$ stands
for the Fourier multipliers and $\mathscr{F}$ is the classical Fourier transform
given by 
\[
\widehat{u}(\xi)=\mathscr{F}u(\xi)=\int_{\mathbb{R}^{n}}e^{-ix\cdot\xi}u(x)\,dx.
\]
We may and shall consider the following $H^s(\mathbb{R}^n)$-norm,  for $s\in \mathbb{R}^{+}$:
\begin{equation}
\|u\|^2_{H^s(\mathbb{R}^n)}= \|u\|^2_{L^2(\mathbb{R}^n)} + \|(-\Delta)^{s/2}u\|^2_{L^2(\mathbb{R}^n)}.
\end{equation}
Let us observe that the semi-norm $\|(-\Delta)^{s/2}u\|^2_{L^2(\mathbb{R}^n)}$ is expressed as follows: 
\[ \|(-\Delta)^{s/2}u\|^2_{L^2(\mathbb{R}^n)}=\left((-\Delta)^su, u\right)_{\mathbb{R}^n},\]
where, for $s\in (0,1)$,
\begin{equation}\label{t66} (-\Delta)^su(x) = c_{n,s}\, \mbox{P.V.}\int_{\mathbb{R}^n}\frac{u(x)-u(y)}{|x-y|^{n+2s}}\, dy\end{equation}
and 
\begin{equation}
\label{t50}
c_{n,s}= \frac{\Gamma(\frac{n}{2}+s)}{|\Gamma(-s)|}\,\frac{4^s}{\pi^{n/2}}.
\end{equation}

The space  $H^{s}(\mathcal{O})$, 
with $\mathcal{O}\subset\mathbb{R}^{n}$ being an arbitrary open set,
is equipped with the following norm (see \cite[Chapter 3]{mclean2000strongly}):
\[
\|u\|_{H^{s}(\mathcal{O})}:=\inf\left\{ \|w\|_{H^{s}(\mathbb{R}^{n})}:w\in H^{s}(\mathbb{R}^{n})\mbox{ and }w|_{\mathcal{O}}=u\right\} .
\]
Furthermore, by taking $\mathcal{C}\subset\mathbb{R}^{n}$  a closed
set such that int$(\mathcal{C})\neq\emptyset$, we can define 
\[
H_{\mathcal{C}}^{s}=H_{\mathcal{C}}^{s}(\mathbb{R}^{n})=\left\{ u\in H^{s}(\mathbb{R}^{n}):\mbox{ }\mbox{supp}(u)\subset\mathcal{C}\right\} .
\]
If $\mathcal{O}$ is a Lipschitz domain, then we have the following
space identification (for more details, see \cite{mclean2000strongly,triebel2002function}):
for $s\in\mathbb{R}$, 
\begin{align}
\label{def:Hstilde} & \widetilde{H}^{s}(\mathcal{O})=H_{\overline{\mathcal{O}}}^{s}(\mathbb{R}^{n}),
 \\
\label{def:Hstilde1} & \widetilde{H}^{s}(\mathcal{O})^*=H_{\overline{\mathcal{O}}}^{s}(\mathbb{R}^{n})^{*}=H^{-s}(\mathcal{O})\mbox{ and }H^{s}(\mathcal{O})^{*}=H_{\overline{\mathcal{O}}}^{-s}(\mathbb{R}^{n})
\end{align}
and, for $s\in(-\frac{1}{2},\frac{1}{2})$, 
\[
H^{s}(\mathcal{O})=H_{\overline{\mathcal{O}}}^{s}(\mathbb{R}^{n})=H_{0}^{s}(\mathcal{O}).
\]

\subsection{Dirichlet problem for $\mathcal{L}^{s}$}

We consider the following Dirichlet problem for the
non-local operator $\mathcal{L}^{s}$ 
\begin{equation}
\begin{cases}
\mathcal{L}^{s}u=f & \mbox{ in \ensuremath{\mathcal{O}}},
\\
u=g & \mbox{ in }\mathcal{O}_{e}=\R^n\setminus \mathcal O,
\end{cases}\label{t81}
\end{equation}
with $f\in\widetilde{H}^{s}(\mathcal{O})^{*}$ and $g\in H^{s}(\mathbb{R}^{n})$.

Let us first observe that for any $v\in H^{s}(\mathbb{R}^{n})$ with $s\in (0,1)$, $\mathcal{L}^{s}v$
can be defined as a distribution in $H^{-s}(\mathbb{R}^{n})$ by \eqref{t75}
as follows: 
\begin{multline}
\left|(\mathcal{L}^{s}v,w)_{\mathbb{R}^{n}}\right|  =  \left|\int_{\mathbb{R}^{n}}\int_{\mathbb{R}^{n}}(v(x)-v(z))(w(x)-w(z))\mathcal{K}^s(x,z)dxdz\right| \\
  \leq  \left(\int_{\mathbb{R}^{n}}\int_{\mathbb{R}^{n}}|v(x)-v(z)|^{2}\mathcal{K}^s(x,z)dxdz\right)^{\frac{1}{2}}\cdot\left(\int_{\mathbb{R}^{n}}\int_{\mathbb{R}^{n}}|w(x)-w(z)|^{2}\mathcal{K}^s(x,z)dxdz\right)^{\frac{1}{2}} \\
 \leq  C\|v\|_{H^{s}(\mathbb{R}^{n})}\|w\|_{H^{s}(\mathbb{R}^{n})},\label{t23}
\end{multline}
for any $w\in H^{s}(\mathbb{R}^{n})$. Here, we have used that $\mathcal{K}^s(x,z)\geq0$
for all $x\neq z$ and also the estimate \eqref{t86}.

We then consider the following associated bilinear form of the above non-local
problem \eqref{t81}: for any $v,\mbox{ }w\in H^{s}(\mathbb{R}^{n})$,
\begin{equation}
\mathcal{B}^s(v,w):=\int_{\mathbb{R}^{n}}\int_{\mathbb{R}^{n}}(v(x)-v(z))(w(x)-w(z))\mathcal{K}^s(x,z)\,dx\,dz.\label{t18}
\end{equation}
It is easy to see from estimate \eqref{t23} that the above bilinear
form $\mathcal{B}^s(\cdot,\cdot)$ is well-defined  in $H^{s}(\mathbb{R}^{n})\times H^{s}(\mathbb{R}^{n})$,
i.e., 
\begin{equation}\label{t90}
\left|\mathcal{B}^s(v,w)\right|\leq C\|v\|_{H^{s}(\mathbb{R}^{n})}\|w\|_{H^{s}(\mathbb{R}^{n})}.
\end{equation}
We  note that, following \eqref{t11}, the bilinear form  $\mathcal{B}^s$ can be also expressed as
follows:\[\mathcal{B}^s(v,w)= \langle \mathcal{L}^{s/2}v,\mathcal{L}^{s/2}w\rangle_{L^2(\mathbb{R}^n)}\quad\forall v,w\in H^s(\mathbb{R}^n).\]
Thus, we have the following existence result (for the complete proof, we refer the reader to \cite{ghoshlinxiao}):


\begin{prop}\label{t94}
Let $\mathcal{O}\subset\mathbb{R}^{n}$ as mentioned above, and $\mathcal{B}^s$
is a bilinear form defined in \eqref{t18}, then there is a solution
$u\in H^{s}(\mathbb{R}^{n})$ such that 
\begin{equation}
\mathcal{B}^s(u,w)=\left\langle f,w\right\rangle \mbox{ for any }w\in\widetilde{H}^{s}(\mathcal{O})\mbox{ with }u-g\in\widetilde{H}^{s}(\mathcal{O}),\label{t10}
\end{equation}
for any $f\in\widetilde{H}^{s}(\mathcal{O})^{*}$ and $g\in H^{s}(\mathbb{R}^{n})$,
where $\left\langle \cdot,\cdot\right\rangle $ stands for the duality
pairing between $(\widetilde{H}^{s})^{*}$ and $\widetilde{H}^{s}$.

Since $0$ is not the eigenvalue of the problem
\begin{equation*}
\left\{
\begin{array}{lll}
\mathcal{L}^s w=0\quad \mbox{ in }\mathcal{O}, 
\\[1mm]
w=0 \quad \mbox{ in }\mathcal{O}_e,
\end{array}
\right.
\end{equation*}
 the above solution $u\in H^s(\mathbb{R}^n)$ is unique. 

In addition, we have the following estimate: 
\begin{equation}
\|u\|_{H^{s}(\mathbb{R}^{n})}\leq C\big(\|f\|_{\widetilde{H}^{s}(\mathcal{O})^{*}}+\|g\|_{H^{s}(\mathbb{R}^{n})}\big),\label{t92}
\end{equation}
for some constant $C>0$ independent of $f$ and $g$ and depending on the ellipticity and boundedness of $A$ (see \eqref{t49}) and on the dimension $n$.
\end{prop}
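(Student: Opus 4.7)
The plan is to reduce to a homogeneous exterior condition and then invoke Lax--Milgram on the Hilbert space $\widetilde{H}^{s}(\mathcal{O})$. First, I would introduce $v:=u-g$ and observe that solving \eqref{t10} with $u-g\in \widetilde{H}^s(\mathcal{O})$ is equivalent to finding $v\in \widetilde{H}^s(\mathcal{O})$ such that
\[
\mathcal{B}^s(v,w)=\langle f,w\rangle-\mathcal{B}^s(g,w)\qquad\text{for all } w\in \widetilde{H}^s(\mathcal{O}).
\]
The right-hand side is a continuous linear functional on $\widetilde{H}^s(\mathcal{O})$: the first term is bounded by $\|f\|_{\widetilde{H}^s(\mathcal{O})^{*}}\|w\|_{H^s(\mathbb{R}^n)}$ by definition, while the second is controlled by $C\|g\|_{H^s(\mathbb{R}^n)}\|w\|_{H^s(\mathbb{R}^n)}$ using the continuity bound \eqref{t90} already established.

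Second, I would verify that $\mathcal{B}^s$ is coercive on $\widetilde{H}^s(\mathcal{O})$. Continuity is given by \eqref{t90}. For coercivity, I would use the lower pointwise bound on the kernel in \eqref{t86}, which gives
\[
\mathcal{B}^s(v,v)=\int_{\mathbb{R}^n}\int_{\mathbb{R}^n}|v(x)-v(z)|^{2}\mathcal{K}^s(x,z)\,dx\,dz \geq c\int_{\mathbb{R}^n}\int_{\mathbb{R}^n}\frac{|v(x)-v(z)|^{2}}{|x-z|^{n+2s}}\,dx\,dz = c'\|(-\Delta)^{s/2}v\|_{L^{2}(\mathbb{R}^n)}^{2},
\]
with $c,c'>0$ depending only on $n$, $s$, and the ellipticity constants of $A$. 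Since $v\in\widetilde{H}^s(\mathcal{O})$ has support in the bounded set $\overline{\mathcal{O}}$, the fractional Poincar\'e inequality yields $\|v\|_{L^{2}(\mathbb{R}^n)}\leq C(\mathcal{O})\|(-\Delta)^{s/2}v\|_{L^{2}(\mathbb{R}^n)}$, so $\mathcal{B}^s(v,v)\geq c''\|v\|_{H^s(\mathbb{R}^n)}^{2}$. The Lax--Milgram theorem then produces a unique $v\in \widetilde{H}^s(\mathcal{O})$ with $\|v\|_{H^s(\mathbb{R}^n)}\leq C(\|f\|_{\widetilde{H}^s(\mathcal{O})^{*}}+\|g\|_{H^s(\mathbb{R}^n)})$, and setting $u=v+g$ gives the desired solution together with the estimate \eqref{t92} by the triangle inequality. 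Uniqueness also follows from coercivity, and is consistent with the stated fact that $0$ is not an eigenvalue of the associated homogeneous problem.

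The main obstacle, and the only nontrivial point beyond bookkeeping, is to make the coercivity argument rigorous. The two-sided kernel estimate \eqref{t86} reduces the problem to the comparison of the form $\mathcal{B}^s$ with the standard Gagliardo seminorm of $(-\Delta)^{s/2}$, after which a fractional Poincar\'e inequality on the bounded domain $\mathcal{O}$ closes the estimate. For $s\in(1/2,1)$ this uses the identification $\widetilde{H}^s(\mathcal{O})=H^s_{\overline{\mathcal{O}}}(\mathbb{R}^n)$ from \eqref{def:Hstilde}; for $s\in(0,1/2)$ one additionally invokes $\widetilde{H}^s(\mathcal{O})=H^s_0(\mathcal{O})$ and applies the classical fractional Hardy/Poincar\'e inequality for compactly supported functions. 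The final constant in \eqref{t92} depends on $\mathcal{O}$, $n$, $s$, and the ellipticity and boundedness constants of $A$ only through $c_1,\dots,c_4$ appearing in \eqref{t49}, as required.
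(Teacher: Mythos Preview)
Your argument is correct and is the standard route to this result: reduce to homogeneous exterior data, use the two-sided kernel bound \eqref{t86} to compare $\mathcal{B}^s$ with the Gagliardo seminorm, invoke a fractional Poincar\'e inequality on $\widetilde{H}^s(\mathcal{O})$ to obtain coercivity, and apply Lax--Milgram. The paper itself does not give a proof of this proposition; it simply refers the reader to \cite{ghoshlinxiao} for the complete argument, so there is no ``paper's own proof'' to compare against beyond that citation. Your write-up supplies exactly the kind of self-contained argument one would expect in that reference, and the dependence of the constant on $n$, $s$, the ellipticity/boundedness parameters of $A$ (through $c_1,\dots,c_4$), and on $\mathcal{O}$ (via the Poincar\'e constant) is correctly identified.
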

\begin{rem}
The solution $u\in H^s(\mathbb{R}^n)$ of problem \eqref{t81} does not depend on the value of $g\in H^s(\mathbb{R}^n)$ on $\mathcal{O}$, it only depends  on $g|_{\mathcal{O}_e}$.  Let $g_{1},g_{2}\in H^{s}(\mathbb{R}^{n})$
be such that $g_{1}-g_{2}\in\widetilde{H}^{s}(\mathcal{O})=H_{\overline{\mathcal{O}}}^{s}$.
Denote by $u_{j}\in H^{s}(\mathbb{R}^{n})$ the solution of \eqref{t81}
with the Dirichlet data $g_{j}$ for each $j=1,2$. It is observed
that 
\[
\widetilde{u}:=u_{1}-u_{2}=(u_{1}-g_{1})-(u_{2}-g_{2})+(g_{1}-g_{2})\in\widetilde{H}^{s}(\mathcal{O})
\]
and $\mathcal{B}_{q}(\widetilde{u},v)=0$ for any $v\in\widetilde{H}^{s}(\mathcal{O})$.
Thus, by  unicity of solution of \eqref{t81}
with $g=0$, one has $\widetilde{u}=0$. Therefore, one can actually
consider the non-local problem \eqref{t81}
with Dirichlet data in the quotient space 
\begin{equation}
X:=H^{s}(\mathbb{R}^{n})/H_{\overline{\mathcal{O}}}^{s}\cong H^{s}(\mathcal{O}_{e}),\label{t48}
\end{equation}
provided that $\mathcal{O}$ is Lipschitz. 
\end{rem}
\begin{rem}[Flux estimate] 
Using  \eqref{t10}, it follows that 
\[\mathcal{B}^s(u,u-g) =\langle f, u-g\rangle,\]
for $f\in\widetilde{H}^{s}(\mathcal{O})^{*}$ and $g\in H^{s}(\mathbb{R}^{n})$. 
Then, we get
\[
\| \mathcal{L}^{s/2}u\|^2_{L^2(\mathbb{R}^n)}- \langle \mathcal{L}^{s/2}u,\mathcal{L}^{s/2}g\rangle_{L^2(\mathbb{R}^n)} =\langle f,u-g\rangle, 
\]
which implies that \[
\frac{1}{2}\| \mathcal{L}^{s/2}u\|^2_{L^2(\mathbb{R}^n)}\leq \frac{1}{2}\|\mathcal{L}^{s/2}g\|^2_{L^2(\mathbb{R}^n)} +\|f\|_{\widetilde{H}^{s}(\mathcal{O})^{*}}\big(\|u\|_{H^s(\mathbb{R}^n)}+\|g\|_{H^{s}(\mathbb{R}^{n})}\big). 
\]
Then, by using  $H^s(\mathbb{R}^n)$-estimate \eqref{t92} in the right hand side, we simply obtain
\begin{equation}\label{t22}
\| \mathcal{L}^{s/2}u\|_{L^2(\mathbb{R}^n)}\leq C\big( \|f\|_{\widetilde{H}^{s}(\mathcal{O})^{*}}+\|g\|_{H^{s}(\mathbb{R}^{n})}\big), 
\end{equation}
for some constant $C>0$ independent of $f$ and $g$ and depending on the ellipticity and boundness of $A$ and  on $n$.
\end{rem}
\subsection{Limit analysis of $\{u_\eps\}_{\eps>0}$ as $\eps \to 0$}

  We consider following sequence of 
non-local operators $\{\mathcal{L}^{s}_\eps\}_{\eps>0}= \big\{\big(-\nabla\cdot (A_\eps(x)\nabla)\big)^{s}\big\}_{\eps>0}$ introduced similar to the operator $\mathcal L^s$, with the sequence $\{A_\eps(x)\}_{\eps>0}$ satisfying the conditions \eqref{t51} and regularity condition \eqref{t35}.  For each $\eps>0$, let $u_\eps\in H^s(\mathbb{R}^n)$ solving 
\begin{equation}
\begin{cases}
\mathcal{L}_\eps^{s}u_\eps=f & \mbox{ in \ensuremath{\mathcal{O}}},\\
u_\eps=g & \mbox{ in }\mathcal{O}_{e},
\end{cases}\label{t79}
\end{equation}
for $f\in \widetilde{H}^{s}(\mathcal{O})^{*}$ and $g\in H^s(\mathbb{R}^n)$ and satisfying the stability and flux estimates:
\begin{equation}
\|u_\eps\|_{H^{s}(\mathbb{R}^{n})}\leq C\big(\|f\|_{\widetilde{H}^{s}(\mathcal{O})^{*}}+\|g\|_{H^{s}(\mathbb{R}^{n})}\big)\label{t62}
\end{equation}
and\begin{equation}
\|\mathcal{L}^{s/2}_\eps u_\eps\|_{L^{2}(\mathbb{R}^{n})}\leq C\big(\|f\|_{\widetilde{H}^{s}(\mathcal{O})^{*}}+\|g\|_{H^{s}(\mathbb{R}^{n})}\big),\label{t29}
\end{equation}
for some constant $C>0$ independent of $f$ and $g$ and dependent on the uniform ellipticity and boundness of $A_\eps$ and on $n$. Thus, $C$ is also independent of $\eps>0$.  
Therefore,  the sequences $\{u_\eps\}_{\eps>0}$ and  $\{\mathcal{L}^{s/2}_\eps u_\eps\}_{\eps>0}$ remain bounded  in $H^s(\mathbb{R}^n)$ (see \eqref{t62}) and $L^2(\mathbb{R}^n)$, respectively (see \eqref{t29}). Hence, upto a subsequence still denoted by same $\{u_\eps\}_{\eps>0}$, we get 
\begin{equation}
\label{t31} 
u_\eps \rightharpoonup u \quad \mbox{ weakly in }H^s(\mathbb{R}^n)
\end{equation}
and 
\begin{equation}
\label{t32} \mathcal{L}^{s/2}_\eps u_\eps \rightharpoonup v \quad \mbox{ weakly in }L^2(\mathbb{R}^n).
\end{equation}
In the sequel,  our goal is to find the homogenized problem or the limit equation satisfied by $u\in H^s(\mathbb{R}^n)$, and also the relation between both weak limits $u$ and $v$. 

To this end, we will proceed by using the extension techniques for the non-local operators, where the extended operator becomes a local operator.  

\section{Extension problems for $\mathcal{L}^{s}$}
\label{sect3}

In this section, we introduce an extension problem, which characterize
the non-local operator $\mathcal{L}^{s}$. 

To this end, let $\mathbb{R}_{+}^{n+1}:=\left\{ (x,y):\,x\in\mathbb{R}^{n},y>0\right\} $ 
be the upper half space of $\mathbb{R}^{n+1}$ with
 its boundary $\partial\mathbb{R}_{+}^{n+1}:=\left\{ (x,0):\,x\in\mathbb{R}^{n}\right\} $. Let $\omega$ be an arbitrary $A_{2}$-Muckenhoupt
weight function (for more details, see \cite{fabes1982local,muckenhoupt1972weighted})
and we denote by $L^{2}(\mathbb{R}^{n+1}_{+},\omega)$ the weighted Sobolev space
containing all functions $U$ which are defined a.e. in $\mathbb{R}_{+}^{n+1}$
such that 
\[
\|U\|_{L^{2}(\mathbb{R}_{+}^{n+1},\omega)}:=\left(\int_{\mathbb{R}_{+}^{n+1}}\omega|U|^{2}dxdy\right)^{1/2}<\infty.
\]
We define 
\[
H^{1}(\mathbb{R}_{+}^{n+1},\omega):=\Big\{U\in L^{2}(\mathbb{R}_{+}^{n+1},\omega):\,\nabla_{x,y}U\in L^{2}(\mathbb{R}_{+}^{n+1},\omega)\Big\},
\]
where $\nabla_{x,y}:=(\nabla,\partial_{y})=(\nabla_{x},\partial_{y})$
is the total derivative in $\mathbb{R}^{n+1}_{+}$. In this work, the
weight function $\omega$ might be $y^{1-2s}$ (or $y^{2s-1}$) and it is known  that
$y^{1-2s}\in A_{2}$ for $s\in(0,1)$ (see \cite{kufner1987some}). It is easy to see that $L^{2}(\mathbb{R}_{+}^{n+1},\omega)$
and $H^{1}(\mathbb{R}_{+}^{n+1},\omega)$ are Banach spaces with respect to the
norms $\|\cdot\|_{L^{2}(\mathbb{R}_{+}^{n+1},\omega)}$ and 
\begin{equation}\label{t84}
\|U\|_{H^{1}(\mathbb{R}_{+}^{n+1},\omega)}:=\left(\|U\|_{L^{2}(\mathbb{R}_{+}^{n+1},\omega)}^{2}+\|\nabla_{x,y}U\|_{L^{2}(\mathbb{R}_{+}^{n+1},\omega)}^{2}\right)^{1/2},
\end{equation}
respectively. We shall also make use of the weighted Sobolev space
$H_{0}^{1}(\mathbb{R}_{+}^{n+1},\omega)$ which is the closure of $C_{0}^{\infty}(\mathbb{R}_{+}^{n+1})$
under the $H^{1}(\mathbb{R}_{+}^{n+1},\omega)$-norm.

We mention that the
fractional Sobolev space $H^{s}(\mathbb{R}^{n})$ can be obtained
as the trace space of the weighted Sobolev space $H^{1}(\mathbb{R}_{+}^{n+1},y^{1-2s})$,
for $s\in(0,1)$, (see \cite{tyulenev2014description}), that is,
\begin{equation}\label{t6}Tr: H^1(\mathbb{R}^{n+1}_{+},y^{1-2s})\to H^s(\mathbb{R}^n)\end{equation}
is continuous. This means that, for
a given $u\in H^{s}(\mathbb{R}^{n})$, there exists $U(x,y)\in H^{1}(\mathbb{R}_{+}^{n+1},y^{1-2s})$
such that $\lim_{y\to 0^{+}}U(x,y)=U(x,0)=u(x)\in H^{s}(\mathbb{R}^{n})$ with 
\begin{equation}
\|u\|_{H^{s}(\mathbb{R}^{n})}\leq C\|U\|_{H^{1}(\mathbb{R}_{+}^{n+1},y^{1-2s})}.\label{t27}
\end{equation}

It also follows that for any bounded open strip away from ${y=0}$, say \[D_{(a,b)}=\{ (x,y)\in \mathbb{R}^{n}\times (a,b):\, \mbox{ $0<a<y<b<\infty$}\},\]
we have: $U\in H^1(\mathbb{R}^{n+1}_{+},y^{1-2s})$ for $s\in (0,1),$ implies $U\in H^1(D_{(a,b)})$ and  also  \begin{equation}\label{t83} \|U\|_{H^1(D_{(a,b)})}\leq C_{a,b} \|U\|_{H^1(\mathbb{R}_{+}^{n+1},y^{1-2s})}.\end{equation}
This is simply a consequence of definition \eqref{t84}, since the weight $y^{1-2s}$ is smooth enough  and
positive in $\overline{D_{(a,b)}}.$

Let us now consider the following extension problem in $\mathbb{R}^{n+1}_{+}$:
\begin{equation}
\begin{cases}
-\mathcal{L}_{x}U+\displaystyle\frac{1-2s}{y}U_{y}+U_{yy}=0 & \mbox{ in }\mathbb{R}_{+}^{n+1},
\\[1ex]
U(\cdot,0)=u(\cdot) & \mbox{ on }\partial\mathbb{R}_{+}^{n+1}.
\end{cases}\label{t41}
\end{equation}
This extension problem is related to the non-local operator \eqref{t1}, 
where the non-local operator $\mathcal{L}^{s}$ has been regarded as
a Dirichlet-to-Neumann map of the above degenerate local problem \eqref{t41}.
For convenience, we construct an auxiliary matrix-valued function
$\widetilde{A}:\mathbb{R}^{n}\to\mathbb{R}^{(n+1)\times(n+1)}$ by
\begin{equation}\label{t34}
\widetilde{A}(x)=\left(\begin{array}{cc}
A(x) & 0\\
0 & 1
\end{array}\right).\end{equation}
We introduce the following degenerate local operator: 
\begin{equation}
\mathscr{L}_{\widetilde{A}}^{1-2s}=\nabla_{x,y}\cdot(y^{1-2s}\widetilde{A}(x)\nabla_{x,y}).\label{t33}
\end{equation}
It can be seen that $y^{-1+2s}\mathscr{L}_{\widetilde{A}}^{1-2s}$
is nothing else than the above degenerate local operator defined in \eqref{t41}, precisely by

\begin{equation}\label{t77}
\mathscr{L}_{\widetilde{A}}^{1-2s}=y^{1-2s}\Big\{ \nabla\cdot(A(x)\nabla)+\frac{1-2s}{y}\partial_{y}+\partial_{y}^{2}\Big\} .
\end{equation}

Let us now recall the following existence result of the above extension problem \eqref{t41}, which complete proof can be found in \cite{ghoshlinxiao}:
\begin{prop}
\label{t24}Let $s\in(0,1)$
and  $\widetilde{A}$ be given by \eqref{t34}, with $A(x)$
satisfying the elipticity condition \eqref{t51}. Then, for  given
$u\in H^{s}(\mathbb{R}^{n})$, there exists a 
 unique minimizer of the Dirichlet functional 
\[
\min_{\Psi\in H^{1}(\mathbb{R}_{+}^{n+1},y^{1-2s})}\left\{ \int_{\mathbb{R}_{+}^{n+1}}y^{1-2s}\widetilde{A}(x)\nabla_{x,y}\Psi\cdot\nabla_{x,y}\Psi dxdy:\,\Psi(x,0)=u(x)\right\},
\]
characterized as the unique weak solution $U\in H^{1}(\mathbb{R}_{+}^{n+1},y^{1-2s})$ solving the problem
\begin{equation}\label{t56}
\begin{cases}
\mathscr{L}_{\widetilde{A}}^{1-2s}U=0 & \mbox{ in }\mathbb{R}_{+}^{n+1},\\
U(\cdot,0)=u & \mbox{ in }\mathbb{R}^{n},
\end{cases}\end{equation}
and satisfying the following stability estimate 
\begin{equation}
\|U\|_{H^{1}(\mathbb{R}_{+}^{n+1},y^{1-2s})}\leq C\|u\|_{H^{s}(\mathbb{R}^{n})},\label{t25}
\end{equation}
for some $C>0$ independent of $u$ and $U$ and depending only on the ellipticity and boundness of $A$ and on $n$. 
\end{prop}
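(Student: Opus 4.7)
The plan is to use the direct method of the calculus of variations on the affine class $\mathcal{K}:=\{\Psi\in H^1(\mathbb{R}^{n+1}_+,y^{1-2s}):\Psi(\cdot,0)=u\}$. First I would invoke the surjectivity of the trace map \eqref{t6} to produce a fixed reference extension $V\in H^1(\mathbb{R}^{n+1}_+,y^{1-2s})$ with $V(\cdot,0)=u$ and $\|V\|_{H^1(\mathbb{R}^{n+1}_+,y^{1-2s})}\le C\|u\|_{H^s(\mathbb{R}^n)}$; this shows $\mathcal{K}=V+H^1_0(\mathbb{R}^{n+1}_+,y^{1-2s})$ is a nonempty, convex, closed subset of $H^1(\mathbb{R}^{n+1}_+,y^{1-2s})$. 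The block structure \eqref{t34} of $\widetilde{A}$ together with \eqref{t51} gives uniform ellipticity of $\widetilde{A}$ on $\mathbb{R}^{n+1}$, so the Dirichlet functional
\[
J(\Psi)=\int_{\mathbb{R}^{n+1}_+}y^{1-2s}\,\widetilde{A}(x)\nabla_{x,y}\Psi\cdot\nabla_{x,y}\Psi\,dx\,dy
\]
satisfies $\lambda^{-1}\|\nabla_{x,y}\Psi\|_{L^2(y^{1-2s})}^2\le J(\Psi)\le\lambda\|\nabla_{x,y}\Psi\|_{L^2(y^{1-2s})}^2$ on $\mathcal{K}$ and is strictly convex.

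Next I would take a minimizing sequence $\Psi_k=V+W_k$ with $W_k\in H^1_0(\mathbb{R}^{n+1}_+,y^{1-2s})$. From $J(\Psi_k)\le J(V)+1$ and ellipticity I get a uniform bound on $\|\nabla_{x,y}W_k\|_{L^2(y^{1-2s})}$; since $y^{1-2s}$ is an $A_2$ weight and $W_k$ has zero trace on $\{y=0\}$, a weighted Poincaré/Hardy-type inequality (available in the $A_2$ setting used in \cite{fabes1982local,muckenhoupt1972weighted}) controls $\|W_k\|_{L^2(y^{1-2s})}$ by $\|\nabla_{x,y}W_k\|_{L^2(y^{1-2s})}$ and yields boundedness in $H^1_0(\mathbb{R}^{n+1}_+,y^{1-2s})$. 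Extract a weakly convergent subsequence $W_k\rightharpoonup W$; set $U:=V+W\in\mathcal{K}$. Weak lower semicontinuity of $J$ (which follows from convexity and the quadratic form of $J$) gives $J(U)\le\liminf J(\Psi_k)=\inf_{\mathcal{K}}J$, so $U$ is a minimizer, and strict convexity gives uniqueness. Writing the Euler--Lagrange equation
\[
\int_{\mathbb{R}^{n+1}_+}y^{1-2s}\,\widetilde{A}(x)\nabla_{x,y}U\cdot\nabla_{x,y}\Phi\,dx\,dy=0,\quad\forall\,\Phi\in H^1_0(\mathbb{R}^{n+1}_+,y^{1-2s}),
\]
one reads off, via \eqref{t77}, the weak form of \eqref{t56}; conversely any weak solution in $\mathcal{K}$ is a critical point of the strictly convex $J$ on $\mathcal{K}$, hence the unique minimizer.

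For the stability estimate \eqref{t25} I would test the identity above with $\Phi=W=U-V$, obtaining
\[
\int y^{1-2s}\widetilde{A}\nabla_{x,y}W\cdot\nabla_{x,y}W=-\int y^{1-2s}\widetilde{A}\nabla_{x,y}V\cdot\nabla_{x,y}W,
\]
whence $\lambda^{-1}\|\nabla_{x,y}W\|_{L^2(y^{1-2s})}\le\lambda\|\nabla_{x,y}V\|_{L^2(y^{1-2s})}$ by Cauchy--Schwarz and ellipticity. Together with the bound on $V$ and the weighted Poincaré inequality on $H^1_0(\mathbb{R}^{n+1}_+,y^{1-2s})$ used to control $\|W\|_{L^2(y^{1-2s})}$, triangle inequality then produces $\|U\|_{H^1(\mathbb{R}^{n+1}_+,y^{1-2s})}\le C\|u\|_{H^s(\mathbb{R}^n)}$ with $C$ depending only on $\lambda$ and $n$.

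The principal technical obstacle is the unboundedness of $\mathbb{R}^{n+1}_+$ together with the degenerate/singular weight $y^{1-2s}$: verifying coercivity of $J$ with respect to the full $H^1(\mathbb{R}^{n+1}_+,y^{1-2s})$-norm (not merely the Dirichlet seminorm) and weak lower semicontinuity both hinge on a weighted Poincaré/Hardy-type inequality for zero-trace functions. One would justify this by combining the $A_2$-theory of \cite{fabes1982local,muckenhoupt1972weighted} with a truncation/approximation argument using $C_0^\infty(\mathbb{R}^{n+1}_+)$, which is the defining dense subclass of $H^1_0(\mathbb{R}^{n+1}_+,y^{1-2s})$. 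Once that inequality is in hand, the rest of the argument is the standard convex minimization scheme.
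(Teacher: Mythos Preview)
Your approach is essentially the paper's: the paper defers existence and uniqueness to \cite{ghoshlinxiao} and only sketches the a priori estimate \eqref{t25}, which it obtains exactly as you do---lift $u$ to a reference extension $U_0$ with controlled $H^1(\mathbb{R}^{n+1}_+,y^{1-2s})$-norm, test the equation satisfied by $V:=U-U_0\in H^1_0$ against $V$ itself, and apply ellipticity and Cauchy--Schwarz. Your direct-method argument supplies the existence part the paper omits, and the weighted Poincar\'e/Hardy step you correctly flag as the main obstacle is precisely the passage from $\|\nabla_{x,y}V\|_{L^2(y^{1-2s})}$ to $\|V\|_{H^1(\mathbb{R}^{n+1}_+,y^{1-2s})}$ that the paper's sketch states without comment.
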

\begin{proof}
The proof could be find in the recent paper \cite{ghoshlinxiao}. For our own convenience, we  mention here the apriori estimate \eqref{t25} in order to show that the constant $C>0$ appearing in \eqref{t25} depends only on the ellipticity and boundness of $A$ and on $n$.  

Given $u\in H^{s}(\mathbb{R}^{n})$, there exists $U_{0}(x,y)\in H^{1}(\mathbb{R}_{+}^{n+1},y^{1-2s})$
such that $\lim_{y\to 0^{+}}U_{0}(x,y)=U_{0}(x,0)=u(x)$ and by using the right continuity of the inverse trace map, we assume $\|U_0\|_{H^{1}(\mathbb{R}_{+}^{n+1},y^{1-2s})}\leq C\|u\|_{H^{s}(\mathbb{R}^{n})}$, where the constant $C>0$ is independent of $U_0 \in H^{1}(\mathbb{R}_{+}^{n+1},y^{1-2s})$ and $u\in H^s(\mathbb{R}^n)$.

  Since $U\in H^{1}(\mathbb{R}_{+}^{n+1},y^{1-2s})$
is the weak solution of \eqref{t56}, let define $V:=U-U_{0}$. Then $V\in H^{1}(\mathbb{R}_{+}^{n+1},y^{1-2s})$ is the weak solution
of 
\begin{equation}\label{t93}
\begin{cases}
\nabla\cdot(y^{1-2s}\widetilde{A}\nabla V)=\nabla\cdot G & \mbox{ in }\mathbb{R}_{+}^{n+1},
\\
V(x,0)=0 & \mbox{ in }\mathbb{R}^{n},
\end{cases}
\end{equation}
where $G:=-y{}^{1-2s}\widetilde{A}(x)\nabla_{x,y}U_{0}$. It is easy
to see that $y^{2s-1}G\in L^{2}(\mathbb{R}_{+}^{n+1},y^{1-2s})$ and
\begin{eqnarray*}
\int_{\mathbb{R}_{+}^{n+1}}y^{1-2s}|y^{2s-1}G|^{2}dx\,dy & = & \int_{\mathbb{R}_{+}^{n+1}}y^{1-2s}\left|\widetilde{A}\nabla_{x,y}U_{0}\right|^{2}\, dx\,dy\\
 & \leq & C\int_{\mathbb{R}_{+}^{n+1}}y^{1-2s}|\nabla_{x,y}U_{0}|^{2}\,dx\,dy,
\end{eqnarray*}
for some constant $C>0$ which depends only on boundness of $A$. Then, by multiplying \eqref{t93} by $V\in H^1_0(\mathbb{R}^{n+1}_{+},y^{1-2s})$ and integrating by parts,  we get
\[
\|V\|_{H^{1}(\mathbb{R}_{+}^{n+1},y^{1-2s})}  \leq  C\|y^{-1+2s}G\|_{L^{2}(\mathbb{R}_{+}^{n+1},y^{1-2s})},\]
for some constant $C>0$ which depends only on the ellipticity of $A$.
Finally,
\begin{eqnarray*}
\|V\|_{H^{1}(\mathbb{R}_{+}^{n+1},y^{1-2s})}
 & \leq & C\|U_{0}\|_{H^{1}(\mathbb{R}_{+}^{n+1},y^{1-2s})}
  \leq  C\|u\|_{H^{s}(\mathbb{R}^{n})}\\
\mbox{or,}\qquad \|U\|_{H^{1}(\mathbb{R}_{+}^{n+1},y^{1-2s})}& \leq & C\|u\|_{H^{s}(\mathbb{R}^{n})},
\end{eqnarray*}
for some universal constant $C>0$ which depends only on the ellipticity and on the boundness of $A$.
\hfill\end{proof}
As a consequence, we observe that $y^{1-2s}\partial_{y}U$
converges to some function $h\in H^{-s}(\mathbb{R}^{n})$, as $y\to0$,
in $H^{-s}(\mathbb{R}^{n})$ defined as follows:
\begin{equation}
(h,\phi(x,0))_{H^{-s}(\mathbb{R}^{n})\times H^{s}(\mathbb{R}^{n})}=\int_{\mathbb{R}_{+}^{n+1}}y^{1-2s}\widetilde{A}(x)\nabla_{x,y}U\cdot\nabla_{x,y}\phi\, dx\,dy,\label{t42}
\end{equation}
for all $\phi\in H^{1}(\mathbb{R}_{+}^{n+1},y^{1-2s})$. In other
words, $U\in H^{1}(\mathbb{R}_{+}^{n+1},y^{1-2s})$ is the weak solution
of the following Neumann boundary value problem 
\begin{equation}
\begin{cases}
\nabla_{x,y}\cdot(y^{1-2s}\widetilde{A}(x)\nabla_{x,y}U)=0 & \mbox{ in }\mathbb{R}_{+}^{n+1},\\
\lim\limits_{y\to 0^{+}}y^{1-2s}\partial_{y}U=h & \mbox{ in }\mathbb{R}^{n}\times\{0\}.
\end{cases}\label{t15}
\end{equation}
The following result characterizes $\lim_{y\to 0^{+}}y^{1-2s}\partial_{y}U=h$,
as $d_{s}h=\mathcal{L}^{s}u$, for some constant $d_{s}$ depending
on $s$, which connects the non-local operator $\mathcal{L}^s$ and the extension problem:
\begin{prop}
\label{t16} Given $u\in H^{s}(\mathbb{R}^{n})$,
define 
\begin{equation}
U(x,y):=\int_{\mathbb{R}^{n}}P_{y}^{s}(x,z)u(z)\,dz,\label{t47}
\end{equation}
where $P_{y}^{s}$ is the Poisson kernel given by 
\begin{equation}
P_{y}^{s}(x,z)=\dfrac{y^{2s}}{4^{s}\Gamma(s)}\int_{0}^{\infty}e^{-\frac{y^{2}}{4t}}W_{t}(x,z)\dfrac{dt}{t^{1+s}},\quad x,z\in\mathbb{R}^{n},\,y>0,\label{t88}
\end{equation}
with the heat kernel $W_{t}(x,z)$ introduced in \eqref{t69}.
Then $U\in H^{1}(\mathbb{R}_{+}^{n+1},y^{1-2s})$  is the weak
solution of \eqref{t56} and 
\begin{equation}
\lim_{y\to0^{+}}\dfrac{U(\cdot,y)-U(\cdot,0)}{y^{2s}}=\frac{1}{2s}\lim_{y\to0+}y^{1-2s}\partial_{y}U(\cdot,y)=\frac{\Gamma(-s)}{4^{s}\Gamma(s)}\mathcal{L}^{s}u(\cdot),\label{t55}
\end{equation}
in $H^{-s}(\mathbb{R}^{n})$.\end{prop}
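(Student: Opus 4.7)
The plan is to verify the three assertions in Proposition \ref{t16} in order: first, that the Poisson-type integral $U$ in \eqref{t47} belongs to $H^{1}(\mathbb{R}^{n+1}_{+},y^{1-2s})$ and solves the degenerate extension PDE \eqref{t56}; second, that the two limits in \eqref{t55} coincide; and third, that their common value is $\frac{\Gamma(-s)}{4^{s}\Gamma(s)}\mathcal{L}^{s}u$ in $H^{-s}(\mathbb{R}^{n})$. Throughout, I would freely exploit the spectral calculus \eqref{t78}--\eqref{t85}, the representation \eqref{t1} for $\mathcal{L}^{s}$, and the heat-kernel bounds \eqref{t49}, and I would reduce analytical claims about $U$ to assertions about the one-parameter family $U(x,y)=e^{-y^{2}\mathcal{L}/(4)\,\cdot\,}\!\star u$ expressed through the semigroup.

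For the first assertion, I would rewrite \eqref{t47} spectrally. Using the heat kernel definition \eqref{t69}, Fubini gives
\begin{equation*}
U(x,y)=\frac{y^{2s}}{4^{s}\Gamma(s)}\int_{0}^{\infty}e^{-y^{2}/(4t)}\,(e^{-t\mathcal{L}}u)(x)\,\frac{dt}{t^{1+s}}
=\int_{0}^{\infty}\varphi_{y}(\lambda)\,dE(\lambda)u\,(x),
\end{equation*}
with the multiplier $\varphi_{y}(\lambda):=\frac{y^{2s}}{4^{s}\Gamma(s)}\int_{0}^{\infty}e^{-y^{2}/(4t)-t\lambda}t^{-1-s}dt$, which after the substitution $t\mapsto y^{2}/(4\lambda)\cdot r$ becomes a function of $y\sqrt{\lambda}$ alone, namely $\varphi_{y}(\lambda)=\psi(y\sqrt{\lambda})$ for a smooth $\psi$ with $\psi(0)=1$ (the normalization $\int_{0}^{\infty}e^{-y^{2}/(4t)}t^{-1-s}dt=4^{s}\Gamma(s)y^{-2s}$ is a direct Gamma-integral). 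Differentiating under the spectral integral and using the ODE satisfied by $\psi$ shows that in the $y$-variable $U$ satisfies $U_{yy}+\tfrac{1-2s}{y}U_{y}=\mathcal{L}_{x}U$, i.e.\ the equation in \eqref{t41}. The trace identity $U(x,0)=u(x)$ follows from $\psi(0)=1$, and the estimate \eqref{t25} combined with uniqueness (Proposition \ref{t24}) identifies this $U$ as \emph{the} weak solution of \eqref{t56}; alternatively the bound $\|U\|_{H^{1}(\mathbb{R}^{n+1}_{+},y^{1-2s})}\leq C\|u\|_{H^{s}(\mathbb{R}^{n})}$ is verified directly by Plancherel on the spectral side, since $|\varphi_{y}(\lambda)|^{2}$ and $|\partial_{y}\varphi_{y}(\lambda)|^{2}$ have explicit $y^{1-2s}$-integrable profiles in $y$ whose $\lambda$-weight is bounded by $1+\lambda^{s}$.

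The heart of the proof, and the main obstacle, is the Dirichlet-to-Neumann identity. I would test the candidate limit against an arbitrary $\phi\in H^{s}(\mathbb{R}^{n})$. Using the spectral representation and Fubini, for $y>0$
\begin{equation*}
\left\langle\frac{U(\cdot,y)-u(\cdot)}{y^{2s}},\phi\right\rangle
=\frac{1}{4^{s}\Gamma(s)}\int_{0}^{\infty}\!\!\int_{0}^{\infty}e^{-y^{2}/(4t)}\bigl(e^{-t\lambda}-1\bigr)\,\frac{dt}{t^{1+s}}\,dE_{u,\phi}(\lambda).
\end{equation*}
The inner $t$-integral converges, as $y\to 0^{+}$, to $\int_{0}^{\infty}(e^{-t\lambda}-1)t^{-1-s}dt=\Gamma(-s)\lambda^{s}$, the standard subordination identity underlying \eqref{t1}. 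The delicate point is to justify taking the limit under the outer spectral measure: one combines the elementary inequality $|e^{-t\lambda}-1|\le\min(1,t\lambda)$ with $e^{-y^{2}/(4t)}\le 1$ to dominate the inner $t$-integral uniformly in $y$ by $C\lambda^{s}$, which is $dE_{u,\phi}$-integrable because $u\in H^{s}(\mathbb{R}^{n})\subset\mathrm{Dom}(\mathcal{L}^{s/2})$ (see \eqref{t13} and \eqref{t59}) and $\phi\in H^{s}(\mathbb{R}^{n})$, so $\int\lambda^{s}\,|dE_{u,\phi}|\le\|\mathcal{L}^{s/2}u\|_{L^{2}}\|\mathcal{L}^{s/2}\phi\|_{L^{2}}<\infty$ by Cauchy--Schwarz. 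Dominated convergence then yields the pointwise-in-$\phi$ limit $\frac{\Gamma(-s)}{4^{s}\Gamma(s)}\langle\mathcal{L}^{s}u,\phi\rangle$, and the uniform $H^{-s}$-bound coming from \eqref{t11} upgrades this to weak convergence in $H^{-s}(\mathbb{R}^{n})$.

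Finally, for the first equality in \eqref{t55}, I would observe that $\partial_{y}\varphi_{y}(\lambda)=\sqrt{\lambda}\,\psi'(y\sqrt{\lambda})$, so that $y^{1-2s}\partial_{y}U$ admits an analogous spectral representation whose multiplier has the small-$y$ expansion $y^{1-2s}\partial_{y}\varphi_{y}(\lambda)= 2s\,\frac{\Gamma(-s)}{4^{s}\Gamma(s)}\lambda^{s}+o(1)$ obtained from the very same subordination identity after one differentiation in $y$; equivalently, since $U(\cdot,y)-u(\cdot)=y^{2s}\bigl[\tfrac{\Gamma(-s)}{4^{s}\Gamma(s)}\mathcal{L}^{s}u+o_{H^{-s}}(1)\bigr]$, differentiation in $y$ gives $\partial_{y}U=2s\,y^{2s-1}\bigl[\tfrac{\Gamma(-s)}{4^{s}\Gamma(s)}\mathcal{L}^{s}u+o_{H^{-s}}(1)\bigr]$. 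Multiplying by $y^{1-2s}$ and dividing by $2s$ recovers the desired common limit, completing the proof. The only nontrivial verification beyond what is sketched is the justification of termwise differentiation in $y$ under the spectral integral, which is handled by the same domination argument applied to $\psi'$ in place of $\psi-1$.
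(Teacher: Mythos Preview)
The paper does not supply its own proof of this proposition; it simply refers to \cite{stinga2010extension} for $u\in\mathrm{Dom}(\mathcal{L}^{s})$ and to \cite{ghoshlinxiao} for the extension to $u\in H^{s}(\mathbb{R}^{n})$. Your sketch is essentially the Stinga--Torrea argument: rewrite $U$ through the heat semigroup and the spectral resolution, identify the multiplier $\varphi_{y}(\lambda)=\psi(y\sqrt{\lambda})$, recover the extension ODE from the ODE for $\psi$, and obtain the Dirichlet-to-Neumann limit via the subordination identity $\int_{0}^{\infty}(e^{-t\lambda}-1)\,t^{-1-s}\,dt=\Gamma(-s)\lambda^{s}$ together with dominated convergence on the spectral measure. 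So there is nothing to contrast: you have filled in the referenced proof.

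One small caution. The ``alternative'' route for the first equality in \eqref{t55}, where you write $U(\cdot,y)-u=y^{2s}\bigl[\tfrac{\Gamma(-s)}{4^{s}\Gamma(s)}\mathcal{L}^{s}u+o_{H^{-s}}(1)\bigr]$ and then differentiate in $y$, is not by itself rigorous: an $o(1)$ remainder need not remain $o(1)$ after differentiation. You are right that the honest argument is the spectral-multiplier computation with $\psi'$ (or, equivalently, a direct manipulation of the heat-semigroup integral after differentiating the prefactor $y^{2s}e^{-y^{2}/(4t)}$), dominated exactly as in your main step. That is also how the references do it.
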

\begin{proof}
The proof can be found in \cite{stinga2010extension}, where the authors prove the
equality \eqref{t55} for $u\in\mathrm{Dom}(\mathcal{L}^{s})$, and recently, in \cite{ghoshlinxiao} the result has been extended for $u\in H^s(\mathbb{R}^n)$.
\hfill\end{proof}
\subsection{Limiting analysis of $\{U_\eps\}_{\eps>0}$ as $\eps\to 0$}
  We consider the following sequence of 
local operators:
\begin{align*} 
\big\{\mathscr{L}_{\widetilde{A}_\eps}^{1-2s}\big\}_{\eps>0}&=\big\{\nabla_{x,y}\cdot(y^{1-2s}\widetilde{A}_\eps(x)\nabla_{x,y})\big\}_{\eps>0}\\
&=\Big\{ y^{1-2s}\Big( \nabla\cdot(A_\eps(x)\nabla)+\frac{1-2s}{y}\partial_{y}+\partial_{y}^{2}\Big) \Big\}_{\eps>0}\end{align*} introduced in \eqref{t77}, with the sequence $\{A_\eps(x)\}_{\eps>0}$ satisfying the ellipticity and boundness conditions \eqref{t51} and regularity condition \eqref{t35}. For each $\eps>0$, let us consider $U_\eps\in H^1(\mathbb{R}^{n+1}_{+},y^{1-2s})$  the solution of the following problem:  
\begin{equation}\label{t38}
\begin{cases}
\mathscr{L}_{\widetilde{A}_\eps}^{1-2s}U_\eps=0 & \mbox{ in }\mathbb{R}_{+}^{n+1},\\
U_\eps(\cdot,0)=u_\eps(\cdot) & \mbox{ in }\mathbb{R}^{n},
\end{cases}\end{equation}
which satisfies the stability estimate
\begin{align}
\|U_\eps\|_{H^1(\mathbb{R}^{n+1}_{+},y^{1-2s})}\leq C\|u_\eps\|_{H^s(\mathbb{R}^n)}
\leq C\big(\|f\|_{\widetilde{H}^{s}(\mathcal{O})^{*}}+\|g\|_{H^{s}(\mathbb{R}^{n})}\big),
\label{t97}
\end{align}
for some constant $C>0$ dependent on $n$, on the uniform ellipticity and boundness of $A_\eps$ and independent of $\eps>0$.  
Due to the above estimate \eqref{t97}, the sequence $\{U_\eps\}_{\eps>0}$ remains bounded  in $H^1(\mathbb{R}^{n+1}_{+},y^{1-2s})$. Therefore, upto a subsequence still denoted by same $\{U_\eps\}_{\eps>0}$, the sequence weakly converges to some limit $U\in H^1(\mathbb{R}^{n+1}_{+},y^{1-2s})$, that is,
\begin{equation}
\label{t91} 
U_\eps \rightharpoonup U \quad \mbox{ weakly in }H^1(\mathbb{R}^{n+1}_{+},y^{1-2s}).
\end{equation}Consequently, by the continuity of the trace map $Tr: H^1(\mathbb{R}^{n+1}_{+},y^{1-2s})\to H^s(\mathbb{R}^n)$ (see \eqref{t27}), we have 
\[Tr(U_\eps)\rightharpoonup  Tr(U) \quad \mbox{ weakly in } H^s(\mathbb{R}^n).\] Since \eqref{t31} holds, we get that $Tr(U_\eps)=u_\eps$ weakly converges to $u\in H^s(\mathbb{R}^n)$, hence, by the uniqueness of the weak limit in $H^s(\mathbb{R}^n)$, we find
\begin{equation}\label{t89}  u(x)= \lim_{y\to 0^{+}}U(x,y)=U(x,0)= Tr(U)\quad\mbox{in }H^s(\mathbb{R}^n).\end{equation}

In the sequel, we look for the homogenized problem or the limit equation satisfied by $U\in H^1(\mathbb{R}^{n+1}_{+},y^{1-2s})$.
To this end, we first observe that the flux quantity $\sigma_\eps(x,y)=y^{1-2s}\widetilde{A}_\eps(x)\nabla_{x,y}U_\eps(x,y)$ is  uniformly bounded in $L^2(\mathbb{R}^{n+1}_{+},y^{1-2s})^{n+1}$ because $A_\eps(x)$ and $U_\eps\in H^1(\mathbb{R}^{n+1}_{+},y^{1-2s})$ are uniformly bounded in their respective spaces. Thus, upto a subsequence denoted by same  $\{\sigma_\eps\}_{\eps>0}$, the flux sequence has a weak limit in $L^2(\mathbb{R}^{n+1}_{+},y^{1-2s})^{n+1}$, called it $\sigma(x,y)$, that is,
\begin{equation}\label{t64}
\sigma_\eps(x,y)\rightharpoonup \sigma(x,y) \quad \mbox{ weakly in }L^2(\mathbb{R}^{n+1}_{+},y^{1-2s})^{n+1}.\end{equation}
Since $\nabla_{x,y}\cdot \sigma_\eps(x,y) =0$ in $\mathbb{R}^{n+1}_{+}$ for all $\eps>0$, and since due to  \eqref{t64} we have\[\nabla_{x,y}\cdot\sigma_\eps(x,y) \to \nabla_{x,y}\cdot\sigma(x,y)\quad \mbox{ strongly in $H^{-1}(\mathbb{R}^{n+1}_{+},y^{1-2s})$ },\] 
 we find that
\[ \nabla_{x,y}\cdot \sigma(x,y) =0\mbox{ in }\mathbb{R}^{n+1}_{+}.\]

Hence, our ongoing job is reduced to find the relation between $\sigma\in L^2(\mathbb{R}^{n+1}_{+})^{n+1}$ and $U\in H^1(\mathbb{R}^{n+1}_{+},y^{1-2s})$, as usual is done in the homogenization framework.


Let us now use the framework of $H$-convergence (for
more details, see  \cite{MT-Hconv,T}) and  prove the following result: 
\begin{lem}
Let us consider the sequence $\{A_\eps\}_\eps$ satisfying conditions \eqref{t51} and that $H$-converges to $A_{*}$ (we denote by $A_\eps\xrightarrow{H} A_{*}$) that is,  
\[A_\eps\nabla w_\eps \rightharpoonup A_{*}\nabla w\quad \mbox{ weakly in } L^2(\mathbb{R}^n)^{n},\] for all test sequences $w_\eps\in H^1(\mathbb{R}^n)$ satisfying 
\begin{align*}
w_{\eps} &\rightharpoonup w \quad\mbox{weakly in }H^1(\mathbb{R}^n),\\
-\nabla\cdot(A_\eps\nabla w_\eps)& \quad\mbox{ strongly convergent in } H^{-1}(\mathbb{R}^n).
\end{align*}
 Then, we have 
 \begin{equation}\label{t71}
 y^{1-2s}\widetilde{A}_\eps(x)\nabla_{x,y}U_\eps(x,y)\rightharpoonup y^{1-2s}\widetilde{A}_{*}(x)\nabla_{x,y}U(x,y) \mbox{ weakly in }L^2(\mathbb{R}^{n+1}_{+},y^{1-2s})^{n+1},
\end{equation}
where  
$U\in H^1(\mathbb{R}^{n+1}_{+},y^{1-2s})$ solves the following homogenized problem: 
\begin{equation}\label{t74}
\begin{cases}
\mathscr{L}_{\widetilde{A}_{*}}^{1-2s}U=0 & \mbox{ in }\mathbb{R}_{+}^{n+1},
\\
U(\cdot,0)=u(\cdot) & \mbox{ in }\mathbb{R}^{n},
\end{cases}
\end{equation}
where $u\in H^s(\mathbb{R}^n)$ and 
\[
\widetilde{A}_{*}(x)=\left(\begin{array}{cc}
A_{*}(x) & 0\\
0 & 1
\end{array}\right).\]
\end{lem}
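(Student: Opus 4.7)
The plan is to use the Tartar method of oscillating test functions combined with a compensated compactness (div--curl) argument, adapted to the degenerate weighted setting. Since $\widetilde{A}_\eps(x)$ decouples the $x$- and $y$-directions and depends only on $x$, the homogenization takes place purely in the $x$-variable and is governed by the $n$-dimensional $H$-limit $A_*$. From \eqref{t91}, \eqref{t64} and \eqref{t89} we already know that $U_\eps\rightharpoonup U$ in $H^{1}(\mathbb{R}_{+}^{n+1},y^{1-2s})$, that $\sigma_\eps\rightharpoonup\sigma$ in the weighted $L^2$ space with $\nabla_{x,y}\cdot\sigma=0$, and that $U(\cdot,0)=u$. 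It therefore suffices to identify $\sigma=y^{1-2s}\widetilde{A}_{*}(x)\nabla_{x,y}U$, since then $\nabla_{x,y}\cdot\sigma=0$ rewrites as $\mathscr{L}_{\widetilde{A}_{*}}^{1-2s}U=0$ and Proposition \ref{t24} (applied to $A_{*}$) yields uniqueness of $U$, and hence convergence of the whole sequence.

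For the identification, fix $j\in\{1,\dots,n\}$ and pick Murat--Tartar correctors $P_j^\eps\in H^1_{\mathrm{loc}}(\mathbb{R}^n)$ furnished by the $H$-convergence hypothesis: $P_j^\eps\rightharpoonup x_j$ weakly in $H^1_{\mathrm{loc}}(\mathbb{R}^n)$, $A_\eps\nabla P_j^\eps\rightharpoonup A_{*}e_j$ weakly in $L^2_{\mathrm{loc}}(\mathbb{R}^n)^n$, and $-\nabla\cdot(A_\eps\nabla P_j^\eps)$ strongly convergent in $H^{-1}_{\mathrm{loc}}(\mathbb{R}^n)$. Extend them trivially to the half-space by setting $\widetilde{P}_j^\eps(x,y):=P_j^\eps(x)$, so that $\widetilde{A}_\eps\nabla_{x,y}\widetilde{P}_j^\eps=(A_\eps\nabla P_j^\eps,0)$ and the auxiliary flux $\Sigma_\eps^{(j)}:=y^{1-2s}\widetilde{A}_\eps\nabla_{x,y}\widetilde{P}_j^\eps=(y^{1-2s}A_\eps\nabla P_j^\eps,0)$ converges weakly to $y^{1-2s}\widetilde{A}_{*}(x)\nabla_{x,y}(x_j)$ in $L^2_{\mathrm{loc}}(\mathbb{R}_+^{n+1})^{n+1}$. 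Moreover, $\nabla_{x,y}\cdot\Sigma_\eps^{(j)}=y^{1-2s}\nabla\cdot(A_\eps\nabla P_j^\eps)$ is strongly convergent in $H^{-1}_{\mathrm{loc}}(\mathbb{R}_{+}^{n+1})$, because away from $\{y=0\}$ the factor $y^{1-2s}$ is a smooth, positive, bounded multiplier.

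Next, apply the Murat--Tartar div--curl lemma twice on an arbitrary strip $\omega\times(a,b)$ compactly contained in $\mathbb{R}_{+}^{n+1}$, where the weight becomes harmless. First, $\sigma_\eps$ is divergence-free and $\nabla_{x,y}\widetilde{P}_j^\eps$ is curl-free, hence $\sigma_\eps\cdot\nabla_{x,y}\widetilde{P}_j^\eps\rightharpoonup \sigma\cdot e_j=\sigma_j$ in the sense of distributions. By the symmetry of $\widetilde{A}_\eps$, the same scalar product equals $\nabla_{x,y}U_\eps\cdot\Sigma_\eps^{(j)}$; since $\nabla_{x,y}U_\eps$ is curl-free and $\Sigma_\eps^{(j)}$ has $H^{-1}_{\mathrm{loc}}$-compact divergence, a second application of div--curl yields $\nabla_{x,y}U_\eps\cdot\Sigma_\eps^{(j)}\rightharpoonup \nabla_{x,y}U\cdot y^{1-2s}\widetilde{A}_{*}\nabla_{x,y}(x_j) = y^{1-2s}(\widetilde{A}_{*}\nabla_{x,y}U)_j$, where the last equality uses the symmetry of $A_{*}$ inherited from that of $A_\eps$ under $H$-convergence. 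Equating the two weak limits gives $\sigma_j=y^{1-2s}(\widetilde{A}_{*}\nabla_{x,y}U)_j$ for $1\le j\le n$. For the remaining vertical component, the block structure of $\widetilde{A}_\eps$ yields directly $\sigma_\eps^{(n+1)}=y^{1-2s}\partial_y U_\eps\rightharpoonup y^{1-2s}\partial_y U=y^{1-2s}(\widetilde{A}_{*}\nabla_{x,y}U)_{n+1}$, with no homogenization effect. This establishes \eqref{t71} and, together with $\nabla_{x,y}\cdot\sigma=0$ and \eqref{t89}, delivers \eqref{t74}.

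The main obstacle is to reconcile the weighted degenerate nature of the extension problem with the classical unweighted compensated compactness theory. The resolution is to localize on strips compactly contained in $\mathbb{R}_{+}^{n+1}$ away from the degenerate boundary $\{y=0\}$, where $y^{1-2s}$ is smooth and uniformly bounded above and below; on such strips, all weighted weak convergences reduce to ordinary $L^2$ weak convergences and the Murat--Tartar div--curl lemma applies verbatim. The identity $\sigma=y^{1-2s}\widetilde{A}_{*}\nabla_{x,y}U$ is first obtained in $\mathcal{D}'(\mathbb{R}_{+}^{n+1})$ by exhausting the half-space with such strips, and is then promoted to an a.e.\ identity since both sides lie in the same weighted $L^2$ space on $\mathbb{R}_{+}^{n+1}$.
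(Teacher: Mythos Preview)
Your argument is correct. Both you and the paper localize to strips $\omega\times(a,b)\Subset\mathbb{R}^{n+1}_{+}$ where the weight $y^{1-2s}$ is harmless, and both exploit that the coefficient matrix $\widetilde{A}_\eps$ is block-diagonal and $x$-dependent only, so the homogenization happens purely in the $x$-variables. The execution, however, differs. The paper rewrites the equation on the strip as $(\mathcal{L}_\eps)_xU_\eps=y^{-1+2s}\partial_y(y^{1-2s}\partial_yU_\eps)$, argues that the right-hand side is strongly convergent in $H^{-1}$ of the strip, and then invokes $H$-convergence of $A_\eps$ as a black box to conclude $A_\eps\nabla_xU_\eps\rightharpoonup A_*\nabla_xU$. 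You instead carry out the Tartar oscillating-test-function mechanism explicitly: you build the $(n{+}1)$-dimensional correctors $\widetilde{P}_j^\eps(x,y)=P_j^\eps(x)$ from the $n$-dimensional ones, and identify each component of the weak flux limit $\sigma$ via two applications of the div--curl lemma, with the vertical component handled directly since no oscillation occurs there.

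What your route buys is self-containment and robustness: it avoids the paper's appeal to Rellich for the strong $L^2$-convergence of $\partial_yU_\eps$ on an unbounded-in-$x$ strip (a step that, as written in the paper, needs more justification), and it makes transparent why the $H$-limit in $\mathbb{R}^{n+1}_{+}$ is exactly $\widetilde{A}_*=\mathrm{diag}(A_*,1)$. What the paper's route buys is brevity once one accepts a parametric version of $H$-convergence: it treats $y$ as a parameter and applies the $n$-dimensional theory directly, without re-running the compensated compactness machinery.
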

\begin{proof}
Let us consider the  region
$D_{(\delta,\delta^{-1})}=\{(x,y): x\in\mathbb{R}^{n}\mbox{ and }\delta<y<\delta^{-1}\}$,
for any $\delta>0$. Since the weight $y^{1-2s}$ is smooth enough  and
positive in $\overline{D_{(\delta,\delta^{-1})}}$, then $U_\eps \in H^{1}(D_{(\delta,\delta^{-1})})$ can be seen
as the solution of the following uniformly elliptic equation: 
\begin{equation}
\nabla_{x,y}\cdot\big(y^{1-2s}\widetilde{A}_\eps(x)\nabla_{x,y}U_\eps\big)=0\quad\mbox{ in } D_{(\delta,\delta^{-1})}.\label{t57}
\end{equation}We also get that $\|U_\eps\|_{H^1(D_{(\delta,\delta^{-1})})}$ is uniformly bounded w.r.t. $\eps$, and using  \eqref{t83} and \eqref{t97}, it follows
that\[\|U_\eps\|_{H^1(D_{(\delta,\delta^{-1})})}\leq C_\delta\big(\|f\|_{\widetilde{H}^{s}(\mathcal{O})^{*}}+\|g\|_{H^{s}(\mathbb{R}^{n})}\big).\] 

Thus, upto a subsequence denoted by same $U_\eps$, the sequence weakly converges to some limit $V$ in $H^1(D_{(\delta,\delta^{-1})})$. We claim that 
$$V=U|_{D_{(\delta,\delta^{-1})}},$$ where $U\in H^1(\mathbb{R}^{n+1}_{+},y^{1-2s})$ is the weak limit of $\{U_\eps\}_{\eps>0}$ in $H^1(\mathbb{R}^{n+1}_{+},y^{1-2s})$ introduced in \eqref{t91}. In fact, this claim simply follows from \eqref{t91} because 
\[ \int_{D_{(\delta,\delta^{-1})}} \varphi\, U_\eps \to \int_{D_{(\delta,\delta^{-1})}}\varphi\, U\quad\forall \varphi\in C^\infty_c(D_{(\delta,\delta^{-1})}),\mbox{ as }\eps\to 0. \]

Let us now claim that if $A_\eps \xrightarrow{H} A_{*}$, then $B_\eps(x,y)= y^{1-2s}\widetilde{A}_\eps(x)$ has the following $H$-limit:
\begin{equation}\label{t73} B_\eps(x,y)\xrightarrow{H} B_{*}(x,y)=y^{1-2s}\widetilde{A}_{*}(x) \mbox{ in }D_{(\delta,\delta^{-1})}.
\end{equation}
In fact, since $U_\eps\in H^1(D_{(\delta,\delta^{-1})})$ solves 
\begin{multline}
\label{t72}
(\mathcal{L_\eps})_{x}\,U_\eps(x,y)
=\frac{1-2s}{y}({U_\eps})_{y}(x,y)+({U_\eps})_{yy}(x,y)
\\
=y^{-1+2s}\partial_{y}\big(y^{1-2s}\partial_{y}U_\eps(x,y)\big)   \mbox{ in }D_{(\delta,\delta^{-1})}
\end{multline}
and 
since $U_\eps \rightharpoonup U $ weakly in $H^1(D_{(\delta,\delta^{-1})})$, then we claim the right hand side of \eqref{t72} satisfies 
\begin{equation}\label{tn1}
y^{-1+2s}\partial_{y}\big(y^{1-2s}\partial_{y}U_\eps(x,y)\big) \to y^{-1+2s}\partial_{y}\big(y^{1-2s}\partial_{y}U(x,y)\big) \mbox{ strongly in }H^{-1}(D_{(\delta,\delta^{-1})}).
\end{equation}
Proof of the claim \eqref{tn1}:
Note that, as the strip $D_{(\delta,\delta^{-1})}$ is bounded in $y$-direction, so by applying the standard Rellich compactness theorem (see \cite{grubb}) from  $U_\eps \rightharpoonup U $ weakly in $H^1(D_{(\delta,\delta^{-1})})$, we get $\partial_{y}U_\eps(x,y)\to \partial_{y}U(x,y)$ strongly in $L^2(D_{(\delta,\delta^{-1}))}$. So, 
 $y^{1-2s}\partial_{y}U_\eps(x,y)\to y^{1-2s}\partial_{y}U(x,y)$ strongly in $L^2(D_{(\delta,\delta^{-1}))}$. Following that, we have $$y^{-1+2s}\partial_{y}\big(y^{1-2s}\partial_{y}U_\eps(x,y)\big) \rightharpoonup y^{-1+2s}\partial_{y}\big(y^{1-2s}\partial_{y}U_\eps(x,y)\big)$$ weakly in $L^2(D_{(\delta,\delta^{-1})})$, 
therefore in $H^{-1}$ strong topology; that is, for any $\phi(x,y)\in C^\infty_c(D_{(\delta,\delta^{-1})})$, we have
\begin{multline*}
\int_{D_{(\delta,\delta^{-1})}} y^{-1+2s}\partial_{y}\big(y^{1-2s}\partial_{y}U_\eps(x,y)\big) \phi(x,y) \, dx\, dy
\\  
=\int_{D_{(\delta,\delta^{-1})}} \big(y^{1-2s}\partial_{y}U_\eps(x,y)\big)\, \partial_{y} \big( y^{-1+2s}\phi(x,y)\big) \, dx\, dy 
\\
\to  
\int_{D_{(\delta,\delta^{-1})}} \big(y^{1-2s}\partial_{y}U(x,y)\big) \partial_{y}\big( y^{-1+2s}\phi(x,y)\big) \, dx\, dy 
\\= \int_{D_{(\delta,\delta^{-1})}} y^{-1+2s}\partial_{y}\big(y^{1-2s}\partial_{y}U(x,y)\big) \phi(x,y) \, dx\, dy. 
\end{multline*}
This establishes our above claim \eqref{tn1}.

Thus, by passing to the limit in \eqref{t72}, as $\eps\to 0$, we obtain the following homogenized equation: 
\[
(\mathcal{L_{*}})_{x}\,U(x,y)=\frac{1-2s}{y}{U}_{y}(x,y)+{U}_{yy}(x,y) \quad \mbox{ in }D_{(\delta,\delta^{-1})},
\]
where $({\mathcal{L}_{*}})_{x}= -\nabla_x\cdot\left(A_{*}(x)\nabla_x\right)$. Moreover, we get the flux convergence 
  \[y^{1-2s}\widetilde{A}_\eps(x)\nabla_{x,y}U_\eps(x,y)\rightharpoonup y^{1-2s}\widetilde{A}_{*}(x)\nabla_{x,y}U(x,y) \mbox{ weakly in }L^2(D_{(\delta,\delta^{-1})})^{n+1}.\]
Thus, $U\in H^1(D_{(\delta,\delta^{-1})})$ solves 
\[
\nabla_{x,y}\cdot\big(y^{1-2s}\widetilde{A}_{*}(x)\nabla_{x,y}U(x,y)\big)=0\mbox{ in } D_{(\delta,\delta^{-1})},\]
which concludes \eqref{t73}.

Since \eqref{t73} holds for any $\delta>0$ small enough, and $y^{1-2s}\widetilde{A}_{*}(x)\nabla_{x,y}U(x,y)\in L^2(\mathbb{R}_{+}^{n+1},y^{1-2s})^{n+1}$, we would like to claim that $U\in H^1(\mathbb{R}^{n+1}_{+},y^{1-2s})$ is solution of 
\begin{equation}\label{tn2}
\nabla_{x,y}\cdot\big(y^{1-2s}\widetilde{A}_{*}(x)\nabla_{x,y}U\big)=0\quad\mbox{ in } \mathbb{R}^{n+1}_{+}=\underset{\delta>0}{\cup}\, D_{(\delta,\delta^{-1})}
\end{equation}
and we have the flux convergence \eqref{t71} in $L^2(\mathbb{R}_{+}^{n+1},y^{1-2s})^{n+1}$.
In order to justify our claim \eqref{tn2},  we need to show $U\in H^1(\mathbb{R}^{n+1}_{+},y^{1-2s})$ satisfy  
\[
\int_{\mathbb{R}^{n+1}_{+}} \big(y^{1-2s}\widetilde{A}_{*}(x)\nabla_{x,y}U(x,y)\big)\cdot\nabla_{x,y}\varphi(x,y)\, dx\,dy=0 \quad\mbox{for all }\varphi \in C^\infty_c(\mathbb{R}^{n+1}_{+}).
\]
Since $\varphi \in C^\infty_c(\mathbb{R}^{n+1}_{+})$ implies there exists  $\delta>0$ such that $\varphi \in C^\infty_c(D_{(\delta,\delta^{-1})})$, and then, from \eqref{t73}, we have 
\begin{multline*}
\int_{\mathbb{R}^{n+1}_{+}} \big(y^{1-2s}\widetilde{A}_{*}(x)\nabla_{x,y}U(x,y)\big)\cdot\nabla_{x,y}\varphi(x,y)\, dx\,dy
\\
=\int_{D_{(\delta,\delta^{-1})}} \big(y^{1-2s}\widetilde{A}_{*}(x)\nabla_{x,y}U(x,y)\big)\cdot\nabla_{x,y}\varphi(x,y)\, dx\,dy = 0.
\end{multline*}

Since $\sigma(x,y)$ is the weak limit of $\sigma_\eps(x,y)$ in $L^2(\mathbb{R}_{+}^{n+1},y^{1-2s})^{n+1}$ (see \eqref{t64}), and $$y^{1-2s}\widetilde{A}_{*}(x)\nabla_{x,y}U(x,y)\in L^2(\mathbb{R}_{+}^{n+1},y^{1-2s})^{n+1},$$ then  
\begin{equation}\label{tn3}
\sigma(x,y)=y^{1-2s}\widetilde{A}_{*}(x)\nabla_{x,y}U(x,y) \mbox{ in } \mathbb{R}_{+}^{n+1},
\end{equation}
that is, we want to show that
\begin{multline*} 
\int_{\mathbb{R}^{n+1}_{+}} \big(y^{1-2s}\widetilde{A}_{\eps}(x)\nabla_{x,y}U_\eps(x,y)\big)\, \varphi(x,y)\, dx\,dy 
\\
\to 
\int_{\mathbb{R}^{n+1}_{+}} \big(y^{1-2s}\widetilde{A}_{*}(x)\nabla_{x,y}U(x,y)\big)\, \varphi \, dx\,dy \quad\mbox{for all }\varphi\in C^\infty_c(\mathbb{R}^{n+1}_{+}).
\end{multline*}
Let us show our claim \eqref{tn3} in a similar way. Since $\varphi \in C^\infty_c(\mathbb{R}^{n+1}_{+})$ implies there exists $\delta>0$ such that $\varphi \in C^\infty_c(D_{(\delta,\delta^{-1})})$, so from 
$\sigma(x,y)$ is the weak limit of $\sigma_\eps(x,y)$ in $L^2(D_{(\delta,\delta^{-1})})$, we have the desired conclusion \eqref{tn3}.

Then combining with the fact $U(x,0)=u(x)\in H^s(\mathbb{R}^n)$ due to \eqref{t89}, we establish that the homogenized boundary value problem \eqref{t74} is satisfied by $U\in H^1(\mathbb{R}^{n+1}_{+},y^{1-2s})$. 

\hfill\end{proof}

\section{Proof of Theorem \ref{t9}}
\label{sect4}
We first use Proposition \ref{t16} and we get 
\begin{equation}
\lim_{y\to 0^{+}}\dfrac{U_\eps(\cdot,y)-U_\eps(\cdot,0)}{y^{2s}}=\frac{1}{2s}\lim_{y\to0^{+}}y^{1-2s}\partial_{y}U_\eps(\cdot,y)=\frac{\Gamma(-s)}{4^{s}\Gamma(s)}\mathcal{L}^{s}_\eps u_\eps \quad\mbox{in $H^{-s}(\mathbb{R}^{n}),$}\label{t52}
\end{equation}
where $U_\eps\in H^{1}(\mathbb{R}_{+}^{n+1},y^{1-2s})$  is the weak
solution of problem \eqref{t38}, and we also have
\begin{equation}
\lim_{y\to0^{+}}\dfrac{U(\cdot,y)-U(\cdot,0)}{y^{2s}}=\frac{1}{2s}\lim_{y\to0^{+}}y^{1-2s}\partial_{y}U(\cdot,y)=\frac{\Gamma(-s)}{4^{s}\Gamma(s)}\mathcal{L}^{s}_{*} u \quad\mbox{in $H^{-s}(\mathbb{R}^{n}),$}\label{t54}
\end{equation}
where $U\in H^{1}(\mathbb{R}_{+}^{n+1},y^{1-2s})$  is the weak
solution of the homogenized problem \eqref{t74}.

Next, due to \eqref{t42} and \eqref{t15}, we have 
\begin{multline*}
\Big(\lim_{y\to0^{+}}y^{1-2s}\partial_{y}U_\eps(x,y),\phi(x,0)\Big)_{H^{-s}(\mathbb{R}^{n})\times H^{s}(\mathbb{R}^{n})}
\\
=\int_{\mathbb{R}_{+}^{n+1}}y^{1-2s}\widetilde{A}_\eps(x)\nabla_{x,y}U_\eps\cdot\nabla_{x,y}\phi\, dxdy
\end{multline*}
and 
\begin{multline*}
\Big(\lim_{y\to 0^{+}}y^{1-2s}\partial_{y}U(x,y),\phi(x,0)\Big)_{H^{-s}(\mathbb{R}^{n})\times H^{s}(\mathbb{R}^{n})}
\\
=\int_{\mathbb{R}_{+}^{n+1}}y^{1-2s}\widetilde{A}(x)\nabla_{x,y}U\cdot\nabla_{x,y}\phi\, dxdy,
\end{multline*}
for all $\phi\in H^{1}(\mathbb{R}_{+}^{n+1},y^{1-2s})$. 

Let us now pass to the limit in the above identity, as $\eps\to 0$,  and use the flux convergence \eqref{t71} to find that
\begin{multline}\label{t36}
\Big(\lim_{y\to0^{+}}y^{1-2s}\partial_{y}U_\eps(x,y),\phi(x,0)\Big)_{H^{-s}(\mathbb{R}^{n})\times H^{s}(\mathbb{R}^{n})}
\\
\to \Big(\lim_{y\to0^{+}}y^{1-2s}\partial_{y}U(x,y),\phi(x,0)\Big)_{H^{-s}(\mathbb{R}^{n})\times H^{s}(\mathbb{R}^{n})},
\end{multline}
for all $\phi\in H^{1}(\mathbb{R}_{+}^{n+1},y^{1-2s})$. 

Then, by taking $\phi(x,0)=\psi(x)\in C^\infty_c(\mathbb{R}^n)$ (which is clearly possible), and  using  
\eqref{t52}, \eqref{t54} and \eqref{t36}, we obtain  
\begin{equation}\label{t80} 
\big(\mathcal{L}^{s}_{\eps}u_\eps(x),\psi(x)\big)_{H^{-s}(\mathbb{R}^{n})\times H^{s}(\mathbb{R}^{n})}
\\
\to \big(\mathcal{L}^{s}_{*}u(x),\psi(x)\big)_{H^{-s}(\mathbb{R}^{n})\times H^{s}(\mathbb{R}^{n})} \mbox{ as }\eps \to 0,
\end{equation}
for all $\psi\in C^\infty_c(\mathbb{R}^n)$. 

We now choose $Supp\, \psi \subset \mathcal{O}$, and since  $\mathcal{L}^{s}_{\eps}u_\eps=f$ in $\mathcal{O}$ due to \eqref{t79},  we therefore obtain
\[ \mathcal{L}^s_{*}u=f \quad\mbox{ in }\mathcal{O}.\]
On the other hand, since $u_\eps =g$ in $\mathcal{O}_e$, then as a $H^s(\mathbb{R}^n)$-weak limit of the sequence $\{u_\eps\}_{\eps>0}$, we get 
\[u=g \quad \mbox{ in }\mathcal{O}_e.\] 
We thus obtained the homogenized equation: $u\in H^s(\mathbb{R}^n)$ is the unique solution of the following non-local problem:
 \[
\begin{cases}
\mathcal{L}_{*}^{s}u= (-\nabla\cdot(A_{*}(x)\nabla )^s u=f & \mbox{ in \ensuremath{\mathcal{O}}},\\
u=g & \mbox{ in }\mathcal{O}_{e},
\end{cases}
\]
for $f\in \widetilde{H}^{s}(\mathcal{O})^{*}$ and $g\in H^s(\mathbb{R}^n)$.

Let us now prove the energy convergence, precisely 
\begin{equation}\label{t3} 
\|\mathcal{L}^{s/2}_\eps u_\eps\|_{L^2(\mathbb{R}^n)} \to \|\mathcal{L}^{s/2}_{*}u \|_{L^2(\mathbb{R}^n)}, \quad \mbox{as } \eps\to 0.
\end{equation}
To this end, we multiply \eqref{t38} by $U_\eps\in H^1(\mathbb{R}^{n+1}_{+},y^{1-2s})$, we integrate by parts, and we pass to the limit as $\eps\to 0$, obtaining
\begin{equation} 
\int_{\mathbb{R}_{+}^{n+1}}y^{1-2s}\widetilde{A}_\eps(x)\nabla_{x,y}U_\eps\cdot\nabla_{x,y}U_\eps\, dxdy \to \int_{\mathbb{R}_{+}^{n+1}}y^{1-2s}\widetilde{A}_{*}(x)\nabla_{x,y}U\cdot\nabla_{x,y}U\, dxdy.
\end{equation}
Consequently, the convergence \eqref{t3} holds.

Finally, we are left out to show the flux convergence \eqref{t21}. To this end, let us observe that our analysis simply suggests that, if we take $\frac{s}{2}$ instead of $s\in (0,1)$ in \eqref{t38}, which is clearly possible and independent of the main problem \eqref{t79} to consider, then it follows from 
\eqref{t80} that\[ 
\mathcal{L}^{s/2}_\eps u_\eps \rightharpoonup \mathcal{L}^{s/2}_{*}u \quad\mbox{ weakly in }H^{-s/2}(\mathbb{R}^n).
\]
Since
$\{\mathcal{L}^{s/2}_\eps u_\eps \}_{\eps>0}\subseteq L^2(\mathbb{R}^n)$  has a $L^2(\mathbb{R}^n)$ weak sub-sequential limit $v\in L^2(\mathbb{R}^n)$ (see \eqref{t32}) and $\mathcal{L}^{s/2}_{*}u\in L^2(\mathbb{R}^n)$, thus $v=\mathcal{L}^{s/2}_{*}u.$ Therefore, 
\[\mathcal{L}^{s/2}_\eps u_\eps \rightharpoonup \mathcal{L}^{s/2}_{*}u \quad \mbox{ weakly in }L^2(\mathbb{R}^n).\]
This completes the proof of  Theorem \ref{t9}.

\section{Non-local homogenization in perforated domain}
\label{sect5}

Let us consider the sequence of closed subsets $\{T_\eps\}_{\eps>0}$  which are called holes and the  perforated domain $\mathcal{O}_\eps$ defined in \eqref{t98} with the condition on the Lebesgue measure \eqref{t99}. 

For $s\in(0,1)$ and for  $\eps>0$, we consider the following non-local Dirichlet problem associated with the fractional Laplace operator described in \eqref{t10001}, precisely:
\begin{equation*}
\begin{cases}
(-\Delta)^s u_\eps=f & \mbox{ in \ensuremath{\mathcal{O}_\eps}},
\\
u_\eps=g & \mbox{ in }\mathbb{R}^n\setminus\mathcal{O}_\eps,
\end{cases}
\end{equation*}
for some $f\in\widetilde{H}^{s}(\mathcal{O})^{*}$ and $g\in H^{s}(\mathbb{R}^{n})$.
We recall from \eqref{t66} that, for $v\in H^s(\mathbb{R}^n)$,
\[ (-\Delta)^sv(x) = c_{n,s}\, \mbox{P.V.}\int_{\mathbb{R}^n}\frac{v(x)-v(y)}{|x-y|^{n+2s}}\, dy,\]
with $c_{n,s}= \frac{\Gamma(\frac{n}{2}+s)}{|\Gamma(-s)|}\,\frac{4^s}{\pi^{n/2}}$,
and $\mbox{P.V.}$ stands for the standard principal value operator. Then, we define the bilinear form as: for any $v,\mbox{ }w\in H^{s}(\mathbb{R}^{n})$,
\begin{equation}\begin{aligned}
\mathcal{B}^s(v,w):&=c_{n,s}\int_{\mathbb{R}^{n}}\int_{\mathbb{R}^{n}}\frac{(v(x)-v(z))(w(x)-w(z))}{|x-z|^{n+2s}}\,dx\,dz\\
&=\int_{\mathbb{R}^{n}}(-\Delta)^{s/2}v\,(-\Delta)^{s/2}w\, dx.\label{t68}
\end{aligned}\end{equation}
Then for each fixed $\eps>0$, there exists a unique solution $u_\eps\in H^s(\mathbb{R}^n)$ such that\begin{equation}
\mathcal{B}^s(u_\eps,w)=\left\langle f,w\right\rangle \mbox{ for any }w\in\widetilde{H}^{s}(\mathcal{O}_\eps)\mbox{ with }u_\eps-g\in\widetilde{H}^{s}(\mathcal{O}_\eps),\label{t5}
\end{equation}
for any $f\in\widetilde{H}^{s}(\mathcal{O})^{*}$ and $g\in H^{s}(\mathbb{R}^{n})$,
where $\left\langle \cdot,\cdot\right\rangle $ stands for the duality
pairing between $(\widetilde{H}^{s})^{*}$ and $\widetilde{H}^{s}$. The above existence and uniqueness result is a direct consequence of  Proposition \ref{t94}.

Let us first note that $u_\eps\in H^s(\mathbb{R}^n)$ is already defined everywhere in the entire space. We now consider 
the bilinear form \eqref{t5} with $w=u_\eps-g\in H^s(\mathbb{R}^n)$ and use definition \eqref{t68} in order to get
\[
\int_{\mathbb{R}^{n}}(-\Delta)^{s/2}u_\eps\,(-\Delta)^{s/2}u_\eps\, dx -\int_{\mathbb{R}^{n}}(-\Delta)^{s/2}u_\eps\,(-\Delta)^{s/2}g\, dx  
=\left\langle f,u_\eps-g\right\rangle, 
\]
or
\begin{equation}\label{t67}
\frac{1}{2}\big\|(-\Delta)^{s/2}u_\eps\big\|^2_{L^2(\mathbb{R}^n)} \leq \ \frac{1}{2}\big\|(-\Delta)^{s/2}g\big\|^2_{L^2(\mathbb{R}^n)} +\|f\|_{\widetilde{H}^{s}(\mathcal{O})^{*}}\|u_\eps-g\|_{H^s(\mathbb{R}^n)}.
\end{equation}
Since $\mathcal{O}$ is bounded, $u_\eps-g=0$ in $\mathcal{O}_e$, and the Hardy-Littlewood-Sobolev inequality (see \cite{Stein}) holds, we obtain  
\[
\|u_\eps-g\|_{L^2(\mathbb{R}^n)} \leq C_{\mathcal{O}} \|u_\eps-g\|_{L^{\frac{2n}{n-2s}}(\mathbb{R}^n)} \leq C \big\|(-\Delta)^{s/2}\, (u_\eps-g)\big\|_{L^2(\mathbb{R}^n)},
\]
then, due to estimate \eqref{t67} we deduce that  
\[\|u_\eps\|_{H^s(\mathbb{R}^n)}\leq C\big(\|f\|_{\widetilde{H}^{s}(\mathcal{O})^{*}}+\|g\|_{H^s(\mathbb{R}^n)}\big),
\]
where the constant $C$ is independent of $\eps>0$. Consequently, upto a subsequence, still denoted by $\{u_\eps\}_{\eps>0}$, we have
\begin{equation}\label{t53}
u_\eps \rightharpoonup u \quad \mbox{ weakly in } H^s(\mathbb{R}^n),
\end{equation}
for some $u\in H^s(\mathbb{R}^n)$.
 
Our goal is to find the  problem satisfied by the limit $u\in H^s(\mathbb{R}^n)$, precisely the homogenized problem.
To accomplish this we 
recall the standard homogenization framework for the Laplacian operator in perforated domain,  following the work of Cioranescu and Murat in \cite[Chapter 4]{MT}.
\subsection*{Homogenization framework in perforated domain}
  
Let us assume that there exist a sequence of functions $\{w_\eps\}_{\eps>0}$ such that satisfies the following three hypotheses: 
\begin{enumerate}
\item[(H1)] $w_\eps\in H^1(\mathcal{O})$;
\item[(H2)] $w_\eps=0$ on the holes $\underset{0<\delta\leq\eps}{\cup}T_\delta$;
\item[(H3)] $w_\eps \rightharpoonup 1$ weakly in $H^1(\mathcal{O})$.
\end{enumerate}
Such sequences exist and can be constructed for the inhomogeneities governed by spherical, elliptical, cylindrical holes, etc., in dimension $n\geq 2$. Let us recall here one example of such sequences (for more details, see \cite[Chapter 4]{MT}):
\begin{example}[Spherical holes periodically distributed in
volume]\label{tn6}
For each value of $\eps>0$, one covers $\mathbb{R}^n$ $(n\geq 2)$ by cubes $Y_\eps$ of size $2\eps$. From
each cube we remove the ball $T_\eps$ of radius $a_\eps>0$ and both, cube and ball, share the same center. In this way, $\mathbb{R}^n$ is perforated by spherical identical
holes as
\[ \mathcal{O}_\eps =\mathcal{O}\cap \Big(\mathbb{R}^n\setminus \underset{0<\delta\leq\eps}{\cup}\, T_\delta\Big),\]
which means that we remove from $\mathcal{O}$ small balls of radius $a_\eps$, whose centers
are the nodes of a lattice in $\mathbb{R}^n$ with cell size $2\eps$. 
\begin{figure}[ht!!]    
\centerline{
   \resizebox{6cm}{!}
   {
   \begin{picture}(0,0)%
\includegraphics{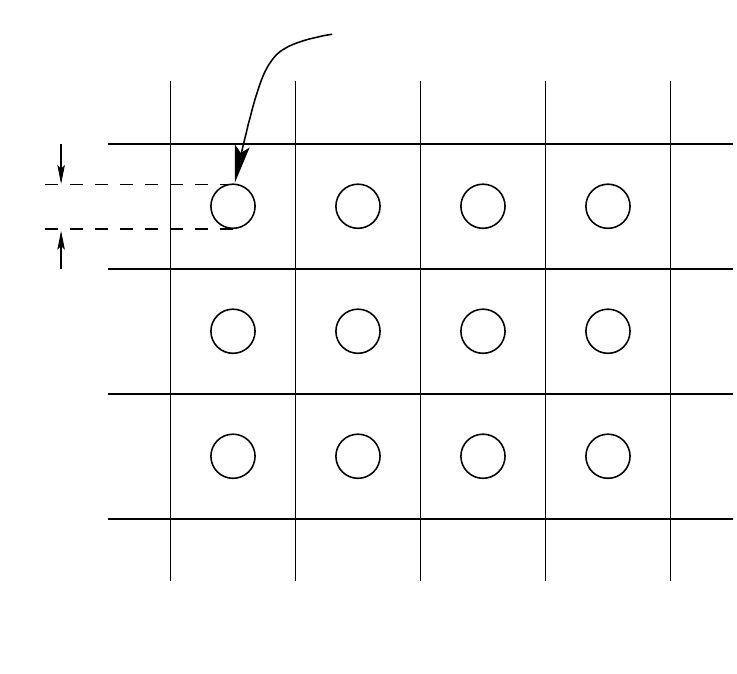}%
\end{picture}%
\setlength{\unitlength}{3947sp}%
\begingroup\makeatletter\ifx\SetFigFont\undefined%
\gdef\SetFigFont#1#2#3#4#5{%
  \reset@font\fontsize{#1}{#2pt}%
  \fontfamily{#3}\fontseries{#4}\fontshape{#5}%
  \selectfont}%
\fi\endgroup%
\begin{picture}(3530,3218)(3983,-6205)
\put(5629,-3134){\makebox(0,0)[lb]{\smash{{\SetFigFont{12}{14.4}{\rmdefault}{\mddefault}{\updefault}{\color[rgb]{0,0,0}$T_\varepsilon$}%
}}}}
\put(5251,-6136){\makebox(0,0)[lb]{\smash{{\SetFigFont{12}{14.4}{\rmdefault}{\mddefault}{\updefault}{\color[rgb]{0,0,0}$\R^n\setminus\bigcup\limits_{0\leq\delta\leq\varepsilon}T_\delta$}%
}}}}
\put(3998,-4039){\makebox(0,0)[lb]{\smash{{\SetFigFont{12}{14.4}{\rmdefault}{\mddefault}{\updefault}{\color[rgb]{0,0,0}$2a^\varepsilon$}%
}}}}
\end{picture}%
}
\hspace{1cm}
    \resizebox{6cm}{!}
    {
    \begin{picture}(0,0)%
\includegraphics{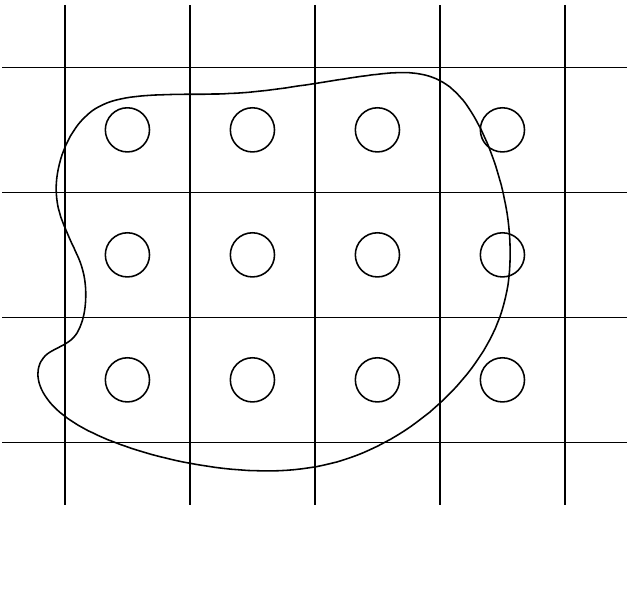}%
\end{picture}%
\setlength{\unitlength}{3947sp}%
\begingroup\makeatletter\ifx\SetFigFont\undefined%
\gdef\SetFigFont#1#2#3#4#5{%
  \reset@font\fontsize{#1}{#2pt}%
  \fontfamily{#3}\fontseries{#4}\fontshape{#5}%
  \selectfont}%
\fi\endgroup%
\begin{picture}(3024,2856)(4489,-6205)
\put(5251,-6136){\makebox(0,0)[lb]{\smash{{\SetFigFont{12}{14.4}{\rmdefault}{\mddefault}{\updefault}{\color[rgb]{0,0,0}$
\mathcal{O}_\varepsilon=\mathcal{O}\bigcap\Big(\R^n\setminus\bigcup\limits_{0\leq\delta\leq\varepsilon}T_\delta\Big)$}%
}}}}
\end{picture}%
    }
}
\caption{Spherical holes periodically distributed in
volume.}
\label{Fig1} 
\end{figure}


In this case, one constructs $w_\eps$ in polar coordinates in the annulus $B_\eps\setminus T_\eps$ as
follows: 
\begin{equation*}
w_\eps(r) = 
\left\{
\begin{array}{lll} 
\displaystyle\frac{\ln a_\eps -\ln r}{\ln a_\eps-\ln \eps}&\mbox{ if }n=2,
\\[2ex]
\displaystyle\frac{a_\eps^{-(n-2)} -r^{-(n-2)}}{a_\eps^{-(n-2)}-\eps^{-(n-2)}}&\mbox{ if }n\geq 3,
\end{array}
\right.
\end{equation*}
 where $r=|x|$. 
\end{example}

\subsection*{Proof of  Theorem \ref{t63}}
By hypotheses (H1) and (H2), for any $\varphi\in\mathcal{D}(\mathcal{O})$, the sequence $\{w_\eps\varphi\}_{\eps>0}\subseteq \widetilde{H}^s(\mathcal{O})$ with $Supp \, w_\eps\varphi \subset\overline{\mathcal{O}}$ (or lies in $H^s_{\overline{\mathcal{O}}}(\mathbb{R}^n)$). Thus,
one can take $w_\eps\varphi\in \widetilde{H}^s(\mathcal{O})$ as test function  in the variational formulation \eqref{t5} to obtain  
\begin{equation}\label{t46} \int_{\mathbb{R}^{n}}(-\Delta)^{s/2}u_\eps\,(-\Delta)^{s/2}(w_\eps\varphi)\, dx  =\langle f, w_\eps\varphi\rangle_{\widetilde{H}^{s}(\mathcal{O})^{*},\,\widetilde{H}^s(\mathcal{O})}.
\end{equation}
Since $u_\eps$ weakly converges to $u$ in $H^s(\mathbb{R}^n)$ due to \eqref{t53}, then
\begin{equation}\label{t44}
(-\Delta)^{s/2}u_\eps \rightharpoonup (-\Delta)^{s/2}u \quad \mbox{ weakly in }L^2(\mathbb{R}^n).
\end{equation}
Next,  due to hypothesis (H3), we have
\begin{equation}\label{t7} w_\eps\varphi \rightharpoonup \varphi \quad \mbox{ weakly in }H^1_{\overline{\mathcal{O}}}(\mathbb{R}^n).\end{equation}
At this point, we recall the Rellich Theorem from \cite[Theorem 8.2, pp.199]{grubb}, which says that: for $t_1 < t_2$ with $t_1,t_2\in\mathbb{R}$, the inclusion map 
\[H^{t_2}_{\overline{\mathcal{O}}}(\mathbb{R}^n) \xhookrightarrow{}
H^{t_1}(\mathbb{R}^n)\mbox{  is compact},\]
then choosing $t_1=s\in(0,1)$  and $t_2=1$, \eqref{t7} gives us the following strong convergence:
\[w_\eps\varphi \to \varphi \quad \mbox{ strongly in }H^s(\mathbb{R}^n), \mbox{ as }\eps\to 0.\]
Thus, we deduce
\begin{equation}\label{t8}
(-\Delta)^{s/2}w_\eps\varphi \to (-\Delta)^{s/2}\varphi \quad \mbox{ strongly in }L^2(\mathbb{R}^n).
\end{equation}

We can now pass to the limit in identity \eqref{t46} and by using the weak convergence \eqref{t44} and the strong convergence \eqref{t8}, we obtain 
\[\int_{\mathbb{R}^{n}}(-\Delta)^{s/2}u\,(-\Delta)^{s/2}\varphi\, dx = \langle f, \varphi\rangle_{\widetilde{H}^{s}(\mathcal{O})^{*},\, \widetilde{H}^s(\mathcal{O})},\]
which can be written as follows: 
\[\mathcal{B}^s(u,\varphi)=\langle f,\varphi\rangle \mbox{ for any }\varphi\in\widetilde{H}^{s}(\mathcal{O})\mbox{ with }u-g\in\widetilde{H}^{s}(\mathcal{O}).\]

Hence, $u\in H^s(\mathbb{R}^n)$ uniquely solves the homogenized equation
\[\begin{cases}
(-\Delta)^su  = f &\quad\mbox{in }\mathcal{O},
\\
u=g    &\quad\mbox{in }\mathbb{R}^n\setminus\mathcal{O},
\end{cases}
\]
for $f\in\widetilde{H}^{s}(\mathcal{O})^{*}$ and $g\in H^{s}(\mathbb{R}^{n})$.
This completes the proof of  Theorem \ref{t63}.
\hfill\qed
\begin{rem}
Let us observe that the  problem associated with  the local operator $-\Delta$, precisely 
\[ -\Delta u_\eps =f \quad\mbox{in }\mathcal{O}_\eps,\quad
 u_\eps\in H^1_0(\mathcal{O}_\eps),\]
with $f\in L^2(\mathcal{O}), $ the $H^1_0(\mathcal{O})$-weak limit,  as $\eps \to 0$,  of the extension sequence 
$\{\widetilde{u}_\eps =\chi_{\mathcal{O}_\eps}u_\eps\}_{\eps>0}\subseteq H^1_0(\mathcal{O})$, say $u\in H^1_0(\mathcal{O})$,
solves the following homogenized problem:
\[ -\Delta u +\mu u=f \quad\mbox{in }\mathcal{O},\quad
 u\in H^1_0(\mathcal{O}),\]
where the so-called "strange term" $\mu\in W^{-1,\infty}(\mathcal{O})$,  defined along the hypotheses (H1), (H2), (H3) and the following convergence:
for every sequence $v_\eps$ such that $v_\eps=0$ on $T_\eps$ satisfying $v_\eps\rightharpoonup v$ weakly in $H^1(\mathcal{O})$ (with $v\in H^1(\mathcal{O})$), one has 
\[\langle-\Delta w_\eps,\varphi v_\eps\rangle_{\left(H^{-1}(\mathcal{O}),H^1_0(\mathcal{O})\right)}\to \langle \mu, \varphi v\rangle_{\left(H^{-1}(\mathcal{O}),H^1_0(\mathcal{O})\right)},
\quad\mbox{ for all }\varphi\in\mathcal{D}(\mathcal{O}).\] Nevertheless, in our non-local problem we do not find  any additional term as $\mu\, u$. The key difference is that, under the same hypotheses on $\{w_\eps\}_{\eps>0}$, the strong convergence result \eqref{t8} fails whenever $s=1$. Since $(-\Delta)^{s}w\to (-\Delta)w$ in $L^2(\mathcal{O})$ for $w\in H^2(\mathcal{O})$ as $s\to 1^{-}$ (see \cite{EGV}), so this tells us the homogenization process as $\eps\to 0$ in perforated domain might not stable under $s\to 1^{-}$ unless $\mu\equiv 0$. In our previous Example \ref{tn6}, with $T_\eps$ as a periodic network of balls of radius $a_\eps$ and centered in $2\pi\eps\mathbb{Z}^N$, $\mu$ becomes $0$ only if (see \cite[Theorem 2.1]{DRL}):
$$\lim_{\eps\to 0} \frac{-(ln\, a_\eps)^{-1}}{\eps^2}=0, \mbox{ for $n=2$,}\qquad \lim_{\eps\to 0} \frac{a_\eps}{\eps^3}=0, \mbox{ for $n\geq 3$}.
$$
Then, in this case, we can say the limiting process as $\eps\to 0$ and $s\to 1^{-}$ are interchangeable.
\end{rem}



\begin{thebibliography}{10}


\bibitem{A}
Gr{\'e}goire Allaire.
\newblock {\em Shape optimization by the homogenization method}, volume 146 of
  {\em Applied Mathematical Sciences}.
\newblock Springer-Verlag, New York, 2002.

\bibitem{APP}
David Applebaum.
\newblock {\em L\'evy processes and stochastic calculus}, volume 116 of {\em
  Cambridge Studies in Advanced Mathematics}.
\newblock Cambridge University Press, Cambridge, second edition, 2009.

\bibitem{ARM}
Mariko Arisawa.
\newblock Homogenization of a class of integro-differential equations with
  {L}\'evy operators.
\newblock {\em Comm. Partial Differential Equations}, 34(7-9):617--624, 2009.

\bibitem{Aronson}
D.~G. Aronson.
\newblock Bounds for the fundamental solution of a parabolic equation.
\newblock {\em Bull. Amer. Math. Soc.}, 73:890--896, 1967.

\bibitem{BLP}
Alain Bensoussan, Jacques-Louis Lions, and George Papanicolaou.
\newblock {\em Asymptotic analysis for periodic structures}, volume~5 of {\em
  Studies in Mathematics and its Applications}.
\newblock North-Holland Publishing Co., Amsterdam-New York, 1978.

\bibitem{BJ}
Jean Bertoin.
\newblock {\em L\'evy processes}, volume 121 of {\em Cambridge Tracts in
  Mathematics}.
\newblock Cambridge University Press, Cambridge, 1996.


\bibitem{BV}
Claudia Bucur and Enrico Valdinoci.
\newblock {\em Nonlocal diffusion and applications}, volume~20 of {\em Lecture
  Notes of the Unione Matematica Italiana}.
\newblock Springer, [Cham]; Unione Matematica Italiana, Bologna, 2016.


\bibitem{caffarelli2007extension}
Luis Caffarelli and Luis Silvestre.
\newblock An extension problem related to the fractional {L}aplacian.
\newblock {\em Communications in partial differential equations},
  32(8):1245--1260, 2007.

\bibitem{caffarelli2016fractional}
Luis~A Caffarelli and Pablo~Ra{\'u}l Stinga.
\newblock Fractional elliptic equations, {C}accioppoli estimates and
  regularity.
\newblock {\em Annales de l'Institut Henri Poincare (C) Non Linear Analysis},
  33(3):767--807, 2016.

\bibitem{MT}
Andrej Cherkaev and Robert Kohn, editors.
\newblock {\em Topics in the mathematical modelling of composite materials}.
\newblock Progress in Nonlinear Differential Equations and their Applications,
  31. Birkh\"auser Boston, Inc., Boston, MA, 1997.

\bibitem{EGV}
Eleonora Di~Nezza, Giampiero Palatucci, and Enrico Valdinoci.
\newblock Hitchhiker's guide to the fractional {S}obolev spaces.
\newblock {\em Bull. Sci. Math.}, 136(5):521--573, 2012.


\bibitem{DRL}
Delphine Dupuy, Rafael Orive, and Loredana Smaranda.
\newblock Bloch waves homogenization of a {D}irichlet problem in a periodically
  perforated domain.
\newblock {\em Asymptot. Anal.}, 61(3-4):229--250, 2009.

\bibitem{EK}
Stewart~N. Ethier and Thomas~G. Kurtz.
\newblock {\em Markov processes}.
\newblock Wiley Series in Probability and Mathematical Statistics: Probability
  and Mathematical Statistics. John Wiley \& Sons, Inc., New York, 1986.
\newblock Characterization and convergence.

\bibitem{fabes1982local}
Eugene~B Fabes, Carlos Kenig, and Raul~P Serapioni.
\newblock The local regularity of solutions of degenerate elliptic equations.
\newblock {\em Communications in Statistics-Theory and Methods}, 7(1):77--116,
  1982.

\bibitem{ghoshlinxiao}
Tuhin Ghosh, Yi-Hsuan Lin, and Xiao Jingni.
\newblock The {C}alder{\'o}n problem for variable coefficients nonlocal
  elliptic operators.
\newblock {\em Communications in Partial Differential Equations},
  42(12):1923--1961, 2017.

\bibitem{grigoryan2009heat}
Alexander Grigor'yan.
\newblock {\em Heat kernel and analysis on manifolds}, volume~47.
\newblock American Mathematical Society Providence, 2009.

\bibitem{grubb}
Gerd Grubb.
\newblock {\em Distributions and Operators}.
\newblock Springer-Verlag, New York, 2008.

\bibitem{JKO}
V.~V. Jikov, S.~M. Kozlov, and O.~A. Ole{\u\i}nik.
\newblock {\em Homogenization of differential operators and integral
  functionals}.
\newblock Springer-Verlag, Berlin, 1994.
\newblock Translated from the Russian by G. A. Yosifian [G. A.
  Iosif{\cprime}yan].

\bibitem{kufner1987some}
Alois Kufner and Anna-Margarete S{\"a}ndig.
\newblock {\em Some applications of weighted Sobolev spaces}, volume 100.
\newblock BG Teubner Gmbh, 1987.

\bibitem{mclean2000strongly}
William Charles~Hector McLean.
\newblock {\em Strongly elliptic systems and boundary integral equations}.
\newblock Cambridge University Press, 2000.

\bibitem{muckenhoupt1972weighted}
Benjamin Muckenhoupt.
\newblock Weighted norm inequalities for the {H}ardy maximal function.
\newblock {\em Transactions of the American Mathematical Society},
  165:207--226, 1972.

\bibitem{MT1}
Fran{\c{c}}ois Murat and Luc Tartar.
\newblock Calculus of variations and homogenization [ {MR}0844873 (87i:73059)].
\newblock In {\em Topics in the mathematical modelling of composite materials},
  volume~31 of {\em Progr. Nonlinear Differential Equations Appl.}, pages
  139--173. Birkh\"auser Boston, Boston, MA, 1997.

\bibitem{MT-Hconv}
Fran{\c{c}}ois Murat and Luc Tartar.
\newblock {$H$}-convergence.
\newblock In {\em Topics in the mathematical modelling of composite materials},
  volume~31 of {\em Progr. Nonlinear Differential Equations Appl.}, pages
  21--43. Birkh\"auser Boston, Boston, MA, 1997.

\bibitem{pazy2012semigroups}
Amnon Pazy.
\newblock {\em Semigroups of linear operators and applications to partial
  differential equations}, volume~44.
\newblock Springer Science \& Business Media, 2012.

\bibitem{PZ}
A.~Piatnitski and E.~Zhizhina.
\newblock Periodic homogenization of nonlocal operators with a convolution-type
  kernel.
\newblock {\em SIAM J. Math. Anal.}, 49(1):64--81, 2017.

\bibitem{RVV}
R\'emi Rhodes and Vincent Vargas.
\newblock Scaling limits for symmetric {I}t\^o-{L}\'evy processes in random
  medium.
\newblock {\em Stochastic Process. Appl.}, 119(12):4004--4033, 2009.

\bibitem{RX}
Xavier Ros-Oton.
\newblock Nonlocal elliptic equations in bounded domains: a survey.
\newblock {\em Publ. Mat.}, 60(1):3--26, 2016.


\bibitem{rudin1991functional}
Walter Rudin.
\newblock {\em Functional analysis}.
\newblock International Series in Pure and Applied Mathematics. McGraw-Hill,
  Inc., New York, second edition, 1991.

\bibitem{ruland2015unique}
Angkana R{\"u}land.
\newblock Unique continuation for fractional {S}chr{\"o}dinger equations with
  rough potentials.
\newblock {\em Communications in Partial Differential Equations},
  40(1):77--114, 2015.

\bibitem{SN}
Nikola Sandri\'c.
\newblock Homogenization of periodic diffusion with small jumps.
\newblock {\em J. Math. Anal. Appl.}, 435(1):551--577, 2016.

\bibitem{SCH}
Russell~W. Schwab.
\newblock Periodic homogenization for nonlinear integro-differential equations.
\newblock {\em SIAM J. Math. Anal.}, 42(6):2652--2680, 2010.

\bibitem{seeleycomplex}
R.~T. Seeley.
\newblock Complex powers of an elliptic operator. 1967 {S}ingular {I}ntegrals
  ({P}roc. {S}ympos. {P}ure {M}ath., {C}hicago, {I}ll., 1966) pp. 288--307
  {A}mer.
\newblock {\em Math. Soc., Providence, RI}.

\bibitem{Stein}
Elias~M. Stein.
\newblock {\em Singular integrals and differentiability properties of
  functions}.
\newblock Princeton Mathematical Series, No. 30. Princeton University Press,
  Princeton, N.J., 1970.

\bibitem{stinga2010extension}
Pablo~Ra{\'u}l Stinga and Jos{\'e}~Luis Torrea.
\newblock Extension problem and {H}arnack's inequality for some fractional
  operators.
\newblock {\em Communications in Partial Differential Equations},
  35(11):2092--2122, 2010.

\bibitem{T}
Luc Tartar.
\newblock {\em The general theory of homogenization}, volume~7 of {\em Lecture
  Notes of the Unione Matematica Italiana}.
\newblock Springer-Verlag, Berlin; UMI, Bologna, 2009.
\newblock A personalized introduction.

\bibitem{triebel2002function}
Hans Triebel.
\newblock Function spaces in {L}ipschitz domains and on {L}ipschitz manifolds.
  characteristic functions as pointwise multipliers.
\newblock {\em Revista Matem{\'a}tica Complutense}, 15(2):475--524, 2002.

\bibitem{tyulenev2014description}
Alexander~Ivanovich Tyulenev.
\newblock Description of traces of functions in the {S}obolev space with a
  {M}uckenhoupt weight.
\newblock {\em Proceedings of the Steklov Institute of Mathematics},
  1(284):280--295, 2014.


\bibitem{SCH2}
Schwab~Russell W.
\newblock Stochastic homogenization for some nonlinear integro-differential equations
\newblock {\em Communications in Partial Differential Equations},
  38(2):171--198, 2013.
  
  
  
  
  
  
  
\end{thebibliography}

\def\cprime{$'$}

\end{document}